\numberwithin{equation}{section}
\newtheorem{definition}{Definition}[section]
\newtheorem{lemma}[definition]{Lemma}
\newtheorem{theorem}[definition]{Theorem}
\newtheorem{proposition}[definition]{Proposition}
\newtheorem{corollary}[definition]{Corollary}
\newtheorem{remark}[definition]{Remark}
\newtheorem{conjecture}[definition]{Conjecture}
\let \l =\lambda
\newcommand{\R}{\mathbb{R}}
\newcommand{\om}{\Omega}
\newcommand{\cof}{{\rm cof}\,}
\newcommand{\ul}{u_{\lambda}}
\newcommand{\1}{{\bf 1}}
\newcommand{\tr}{{\rm tr}\,}
\newcommand{\eps}{\epsilon}
\newcommand{\sca}{\mathcal{A}}
\newcommand{\rb}{\bar{\rho}}
\newcommand{\scf}{\mathcal{F}}
\renewcommand{\rb}{\raisebox{2.7ex}{}}
\begin{document}
\author[J.J. Bevan]{Jonathan J. Bevan}
\address[J.J. Bevan]{Department of Mathematics, University of Surrey, Guildford, GU2 7XH, United Kingdom.  (Corresponding author: \textbf{t:} $+44 (0)1483 \  682620$.)}
\email[Corresponding author]{j.bevan@surrey.ac.uk}
\author[J.H.B. Deane]{Jonathan H.B. Deane}
\address[J.H.B. Deane]{Department of Mathematics, University of Surrey, Guildford, GU2 7XH, United Kingdom.}
\email{j.deane@surrey.ac.uk}

\title[A calibration method for estimating critical cavitation loads]
{A calibration method for estimating critical cavitation loads from below in 3D nonlinear elasticity}

\begin{abstract}  In this paper we give an explicit sufficient condition for the affine map $u_\lambda(x):=\lambda x$ to be the global energy minimizer of a general class of elastic stored-energy functionals $I(u)=\int_{\Omega} W(\nabla u)\,dx$ in three space dimensions, where $W$ is a polyconvex function of $3 \times 3$ matrices.  The function space setting is such that cavitating (i.e., discontinuous) deformations are admissible.  In the language of the calculus of variations, the condition ensures the quasiconvexity of $I(\cdot)$ at $\lambda \1$, where $\1$ is the $3 \times 3$ identity matrix.  Our approach relies on arguments involving null Lagrangians (in this case, affine combinations of the minors of $3 \times 3$ matrices), on the previous work \cite{BZ15}, and on a careful numerical treatment to make the calculation of certain constants tractable.   We also derive a new condition, which seems to depend heavily on the smallest singular value $\lambda_1(\nabla u)$ of a competing deformation $u$, that is necessary for the inequality $I(u) < I(\ul)$, and which, in particular, does not exclude the possibility of cavitation.  
\end{abstract}

\maketitle

\section{Introduction}

In this paper we consider an established model of elastic material that is capable of describing cavitation, that is, of admitting energy minimizers that are discontinuous.    This phenomenon was first analysed in the setting of hyperelasticity by Ball in \cite{Ba82};  since then, a large and sophisticated literature has developed, including but not limited to \cite{St84,Si86,MS95,SS08,SS08prime,HMC10,HMC11},  part of which focuses on finding boundary conditions which, when obeyed by all competing deformations, ensure that cavitation does \emph{not} occur.   It is to the latter body of work that we contribute by considering the case of purely bulk energy
\begin{align*} I(u) & =\int_{\om} W(\nabla u(x))\,dx,
\end{align*}
where $u: \om \to \R^3$ represents a deformation of an elastic material occupying the domain $\om$ in a reference configuration, and where $W$ is a suitable stored-energy function.   In the three dimensional setting, we give an explicit characterization of those affine boundary conditions of the form
\begin{align}\label{qc}  \ul(x)=\l x,
\end{align}
where $\l>0$ is a parameter, such that the quasiconvexity inequality
\begin{align*} I(u) \geq I(\ul)
\end{align*}
holds among all suitable maps $u$ agreeing with $\ul$ on $\partial \om$.   It is by now well established that if $\l$ is large enough, $\l \geq \l_{\textrm{crit}}$, say,  then such an inequality cannot hold.   Thus we probe $\l_{\textrm{crit}}$ by finding $\l_0$ such that \eqref{qc} holds whenever $\l \leq \l_0$.  This question has been addressed in \cite{MSS96} and, more recently, in \cite{BZ15}.   In this paper we use a new approach, involving the addition of a suitable null Lagrangian (a method sometimes known as calibration), to deduce concrete lower bounds on $\lambda_{\textrm{crit}}$ in the three dimensional case.    


The analysis centres ostensibly on functions of the singular values of $3 \times 3$ matrices.  Let $A$ be a $3 \times 3$ matrix.  Then the singular values of $A$ are normally written as $\l_j(A)$, for $j=1,2,3$, and their squares are the eigenvalues of $A^TA$.   See \cite[Chapter 13]{Dac08} or \cite[Section 3.2]{Ci04} for useful introductions to singular values, as well as \cite{Ba77,Ba82,Si86} for an illustration of their use in nonlinear elasticity.  Singular values arise naturally in the stored-energy functions of isotropic elastic materials, and also in lower bounds which can be derived from them.  Such was the case in \cite{BZ15}, where, for $2 < q < 3$ and for convex functions $Z$ and $h$,  a stored-energy function very similar to\footnote{The original functional contained an `artificial' quadratic term $r|A|^2$, with $r$ large, to deal with the difficulties presented by the function $P$. This is no longer needed thanks to the calibration method we introduce.}
\begin{align*} W(A) = |A|^q + Z(\cof A) + h(\det A)
\end{align*}
was shown to obey the inequality
\begin{align}\label{moosewood} I(u)-I(\ul) & \geq \int_{\om} \kappa |\nabla u - \nabla \ul|^q + h'(\l^3)\Pi_{j=1}^{3}(\l_i(\nabla u)-\l)+\l P(\nabla u)\,dx.\end{align}
The function $P$ is defined by
\begin{equation}\label{f1}
P(A) = \sum_{1 \leq i < j \leq 3} \lambda_{i}(A)\lambda_{j}(A) - \lambda \sum_{1 \leq i \leq 3} \lambda_{i}(A)
\end{equation}
and the constant $\kappa$ satisfies bounds defined in \eqref{j:kappalimits} below.  By grouping the first two integrands in \eqref{moosewood} together, it is possible to find conditions on $\l$ such that $\int_{\om} \kappa |\nabla u - \nabla \ul|^q + h'(\l^3)\Pi_{j=1}^{3}(\l_i(\nabla u)-\l) \,dx \geq 0$.  However, the corresponding inequality for $P$, namely 
\begin{align}\label{coruscate}\int_{\om} P(\nabla u) \,dx \geq 0,
\end{align}
which, since $P(\l \1)=0$, is equivalent to the quasiconvexity of $P$ at $\l \1$, remains an open question.  We show in this paper that $P$ does satisfy a condition necessary for quasiconvexity at $\l \1$ (see Proposition \ref{p1}, part (a): rank-one convexity at $\l \1$), but that the most tractable sufficient condition for \eqref{coruscate} cannot hold (see Proposition \ref{p1}, part (b): polyconvexity at $\l \1$)).  Trying instead to find conditions under which each of 
\begin{align} \nonumber \int_{\om} (\kappa/2) |\nabla u - \nabla \ul|^q + h'(\l^3)\Pi_{j=1}^{3}(\l_i(\nabla u)-\l)\,dx & \geq 0 \\
\label{snakebark}\int_{\om} (\kappa/2) |\nabla u - \nabla \ul|^q + h'(\l^3)P(\nabla u) \,dx & \geq 0
\end{align}
holds is closer to the right approach, although for reasons connected with the curvature of $P$ at $\l \1$, \eqref{snakebark} is still not possible!  This is what leads us to introduce the null Lagrangian 
\begin{align*}N(A)=\tr \cof A - \l \tr A,\end{align*}
which has the property that $\int_{\om} N(\nabla u) \,dx = 0$ for any admissible $u$ and is such that there \emph{are} conditions on $\l$ under which 
\begin{align*}\int_{\om} (\kappa/2) |\nabla u - \nabla \ul|^q + h'(\l^3)(P(\nabla u)-N(\nabla u)) \,dx & \geq 0
\end{align*}
for all admissible $u$.   See Theorem \ref{printingpress} and \eqref{estsuper} in particular.    In fact, $N$ is the unique null Lagrangian for which this method works:  see Proposition \ref{proptan}.
More generally, we remark that $P$ and $G:=P-N$ possess properties that are both interesting in their own right and, at the same time, highly non-trivial to derive. (See Section \ref{earedwillow}.)   A useful introduction to null Lagrangians can be found in \cite{BCO}.

The upper bound $\l_0$ given in the right-hand side of \eqref{estsuper} is investigated in Section \ref{plotselm} using a careful mixture of analysis and numerical techniques.   The partnership between these approaches seems to be particularly fruitful when applied to $G$ and to functions derived from it.  Accordingly, we find an explicit constant $\nu_1 \approx 0.4501$ such that if 
\begin{align*} 0 \leq \l^{3-q} h'(\l^3)& \leq \frac{\kappa}{2} (\sqrt{2})^{q-3} \nu_1^{2-q}, \end{align*}
 then $I(u) \geq I(\ul)$.  See Section \ref{plotselm}, Subsection \ref{concrete} and the appendices for details.

In Section \ref{spectral}, a careful analysis of the function 
\begin{align*} H(A)& := \Pi_{j=1}^{3}(\l_j(\nabla u)-\l) + \l G(A)\end{align*}
yields, among other things, what we believe to be new necessary conditions for the inequality $I(u) \leq I(\ul)$.  A distinguished role seems to be played by the smallest singular value, $\l_1(\nabla u)$:  see Proposition \ref{lime} in particular.

\subsection{Notation} 
The inner product between two matrices $A$ and $B$ is given by $A \cdot B = \tr A^T B$, and, as usual, $\tr A$ denotes the trace of $A$.   For a function $f: \R^{3 \times 3} \to \R$ and any $3 \times 3$ matrix $U$, the shorthand 
\begin{align*} D_{U}f(A) & =  \nabla f(A) \cdot U  \\
D^{2}_{U}f(A) & = \nabla^{2}f(A) [U,U]
\end{align*}
will be used, where as usual $\nabla^{2}f(A)[U,U]=f_{,_{(ij)(kl)}}(A)U_{ij}U_{kl}$ with the summation convention in force.   When discussing polyconvexity, which is defined when it next features in the paper, we use the shorthand notation $\R^{19}$ for the set $\R^{3 \times 3} \times \R^{3 \times 3} \times \R$ containing the list of minors $R(A):=(A, \cof A, \det A)$ of any $3 \times 3$ matrix.   The set of $3 \times 3$ square, orthogonal matrices is denoted by $O(3)$, and the subset of $O(3)$ consisting of those matrices with determinant equal to $1$ will be written $SO(3)$.  For any two vectors $a$ and $n$ in $\R^3$, the notation $a \otimes n$ will denote the matrix of rank one whose $(i,j)$ entry is $a_i n_j$.    Our notation for Sobolev spaces is standard.
 
\section{Calibration and the function $G(A)$}\label{earedwillow}

In this section we give some properties of the function $P$ and use them to derive the null Lagrangian $N$ alluded to above.   To start with, two technical results are required.

\begin{lemma}\label{motet} Let $\lambda >0$.  Then 
\begin{itemize}\item[(i)] $\sum_{i=1}^{3} D_{U}\lambda_{i}(\lambda \1) = \tr U$;\\
\item[(ii)] $\sum_{1 \leq i < j \leq 3} D_{U}\lambda_i(\lambda \1)D_{U}\lambda_j(\lambda \1) =  \frac{(\tr U)^2}{2}-\frac{|U|^2}{4}-\frac{\tr (U^2)}{4}$;\\
\item[(iii)] $\lambda \sum_{i=1}^{3} D_{U}^{2} \lambda_i (\lambda \1) + \sum_{i=1}^{3} (D_{U}\lambda_{i}(\lambda \1))^2 = |U|^2$. 
\end{itemize}
In particular, 
\begin{align}\label{d2l}\sum_{i=1}^{3} \lambda D^{2}_{U}\lambda_{i}(\lambda \1) & = \frac{|U|^2 - \tr (U^2)}{2}\end{align}
and 
\begin{align}\label{sumsq}\sum_{i=1}^{3} (D_{U}\lambda_{i}(\lambda \1))^2 = \frac{|U|^2 + \tr (U^2)}{2}.\end{align}
\end{lemma}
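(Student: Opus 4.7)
The immediate difficulty is that all three singular values coalesce at $A = \lambda \1$, so the individual $\lambda_i$ are not differentiable as functions on matrix space near that point. To sidestep this, I would fix the perturbation direction $U$ and consider the analytic one-parameter family $A(t) := \lambda \1 + tU$. Since $A(t)^T A(t)$ is a symmetric analytic family with all eigenvalues equal to $\lambda^2>0$ at $t=0$, Rellich's theorem supplies an analytic labelling of its eigenvalues $\mu_i(t)$ in a neighbourhood of $t=0$, and the singular values $\lambda_i(t) := \sqrt{\mu_i(t)}$ are analytic too. I would then interpret $D_U \lambda_i(\lambda \1)$ and $D^2_U \lambda_i(\lambda \1)$ as $\lambda_i'(0)$ and $\lambda_i''(0)$ for this labelling; every quantity appearing in the lemma is symmetric in the $\lambda_i$, so it does not depend on the choice of labelling.

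For parts (i) and \eqref{d2l}, the main step is to Taylor-expand $S(t) := \sqrt{A(t)^T A(t)}$ as $\lambda \1 + t B_1 + t^2 B_2 + O(t^3)$, square it, and match coefficients against $A(t)^T A(t) = \lambda^2 \1 + t\lambda(U + U^T) + t^2 U^T U$. This yields $B_1 = (U+U^T)/2$ and $2\lambda B_2 = U^T U - B_1^2$. Since $\sum_i \lambda_i(t) = \tr S(t)$, taking traces immediately gives $\sum_i D_U \lambda_i(\lambda \1) = \tr B_1 = \tr U$, which is (i), and also $\sum_i D^2_U\lambda_i(\lambda \1) = 2\tr B_2 = (|U|^2 - \tr(B_1^2))/\lambda$; a short calculation gives $\tr(B_1^2) = (|U|^2 + \tr(U^2))/2$, and multiplying through by $\lambda$ yields \eqref{d2l}.

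The cleanest route to (iii) and to \eqref{sumsq} is to avoid expanding $S(t)$ any further and instead to differentiate the smooth polynomial identity $\sum_i \lambda_i(A)^2 = |A|^2$ twice along $A(t)$ at $t=0$. The right-hand side contributes $2|U|^2$, the left-hand side contributes $2\sum_i [(D_U\lambda_i(\lambda \1))^2 + \lambda D^2_U\lambda_i(\lambda \1)]$, and this is (iii); subtracting \eqref{d2l} from it then produces \eqref{sumsq}. Part (ii) follows at once from the algebraic identity $2\sum_{i<j} x_ix_j = (\sum_i x_i)^2 - \sum_i x_i^2$, applied with $x_i = D_U\lambda_i(\lambda \1)$, together with (i) and \eqref{sumsq}.

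The only obstacle worth flagging is the smoothness issue at the degenerate point; Rellich's theorem handles it along analytic paths, and thereafter the proof is a direct unwinding of the elementary symmetric function identities above, with no further surprises.
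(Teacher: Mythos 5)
Your proof is correct, and it differs from the paper's in two respects. First, you make explicit the point the paper passes over in silence: at $\lambda\1$ the three singular values coalesce, so you invoke Rellich's theorem along the analytic path $t\mapsto\lambda\1+tU$ to justify the expansions $\lambda_i(t)=\lambda+t\,D_U\lambda_i(\lambda\1)+\tfrac{t^2}{2}D^2_U\lambda_i(\lambda\1)+o(t^2)$ and note that all the sums in the lemma are labelling-independent; the paper simply posits such expansions. Second, your route to part (ii) is genuinely different: the paper expands the characteristic polynomial $\det(A^TA-z^2\1)=0$ at $A=\lambda\1+hU$, reads off a cubic whose roots are the first derivatives $Z_i=D_U\lambda_i(\lambda\1)$, and obtains (ii) from Vieta's formulas, afterwards deducing \eqref{sumsq} and \eqref{d2l}; you instead get (i) and \eqref{d2l} from the Taylor expansion of $S(t)=\sqrt{A(t)^TA(t)}$ (via $\tr S(t)=\sum_i\lambda_i(t)$ and the coefficient identities $B_1=U_s$, $2\lambda B_2=U^TU-B_1^2$, with $\tr(U_s^2)=\tfrac{1}{2}(|U|^2+\tr U^2)$), get (iii) by differentiating $\sum_i\lambda_i^2(A)=|A|^2$ twice along the path (which is essentially the paper's own device for (i) and (iii)), and recover (ii) last from the Newton-type identity $2\sum_{i<j}x_ix_j=(\sum_i x_i)^2-\sum_i x_i^2$. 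The logical order is reversed but there is no circularity. What each approach buys: the paper's Vieta argument yields all elementary symmetric functions of the $Z_i$ at once (including $\det U_s=Z_1Z_2Z_3$, which could be useful elsewhere), while yours trades the characteristic-polynomial bookkeeping for elementary matrix Taylor expansions and is more careful about the degenerate point, at the modest cost of invoking analytic perturbation theory and the analyticity of the matrix square root on positive-definite matrices.
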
 
\begin{proof} \textbf{Parts (i) and (iii):}  Let $\lambda_i (\l \1 + hU) = \l + D_{U}\l_i(\l \1) + \frac{h^2}{2}D_U^2\l_i(\l \1) + o(h^2)$ for each $i$ and insert into the identity $\sum_{i=1}^{3} \l_i(\l \1 + h U) = |\l \1 + h U|^2$.  Part (i) follows by comparing terms of order $h$ and part (iii) by comparing terms of order $h^2$. 

\vspace{2mm}
\noindent \textbf{Part (ii)}:  Let $A=\l\1+hU$ and note that, by definition, each $\l_i(A)$ is a root $z_i$, say, of the polynomial $\det(A^T A - z^2 \1) = 0$.   Now 
$A^T A = \l^2 \1+ 2 \l h U_s+  h^2 U^T U$, so 
\begin{align}\nonumber 0  & = \det( (\l^2 - z^2)\1 + 2 \l h U_s + h^2 U^{T} U) \\
\label{detexpand}& = \tau^3 + \tau^2 \tr (2 \l h U_s + h^2 U^{T} U) + \tau \tr \cof (2 \l h U_s + h^2 U^{T} U)  + \det(2 \l h U_s + h^2 U^{T} U), 
\end{align}
where $U_s:=(U+U^T)/2$ is the symmetric part of $U$ and $\tau:=\l^2 - z^2$.  Using the development of $\l_i(\l \1 + h U)$ given above, but this time keeping only terms of order $h$, it follows that $\tau = - 2 h \l D_U \l_i(\l \1) + o(h^2)$.  Putting this into \eqref{detexpand} and writing  $Z_i:= D_U \l_i(\l \1)$ for brevity,
shows that the $Z_i$ are roots of the following polynomial equation:
\begin{align*}-8 \l^3 h^3 Z^3 + 8 \l^3 h^3 \tr U Z^2 - 8 \l^3 h^3 \tr \cof U_s Z + 8 \l^3 h^3 \det U_s + o(h^3) & = 0.
\end{align*}
Dividing by $-8 \l^3 h^3$, letting $h \to 0$  and using the identity
\[ \tr \cof U_s = \frac{(\tr U)^2}{2} - \frac{|U|^2}{4} - \frac{ \tr U^2}{4}\]
gives
\[Z^3 - (\tr U) Z^2 +  \left(\frac{(\tr U)^2}{2} - \frac{|U|^2}{4} - \frac{ \tr U^2}{4}\right) Z - \det U_s = 0.\]
The roots $Z_1,Z_2,Z_3$ must therefore satisfy 
\begin{align}\label{sumz} - \sum_{i=1}^{3} Z_i & = - \tr U, \\
\label{sumzz} \sum_{1 \leq i < j \leq 3} Z_i Z_j &  = \frac{(\tr U)^2}{2} - \frac{|U|^2}{4} - \frac{ \tr U^2}{4} .    
\end{align}
Replacing each $Z_i$ with $ D_U \l_i(\l \1)$ in equation \eqref{sumz} merely recovers (or provides an alternative derivation of) part (i) of the lemma, while equation \eqref{sumzz} delivers part (ii).   

Equation \eqref{sumsq} now follows by using the identity
\begin{align*} \sum_{i=1}^3 Z_i^2 = \left( \sum_{i=1}^{3} Z_i \right)^2 - 2 \sum_{1 \leq i < j \leq 3} Z_i Z_j 
\end{align*}
together with parts (i) and (ii) above.  Finally, \eqref{d2l} follows from (iii) above and \eqref{sumsq}.  This concludes the proof.
\end{proof}

Note that \eqref{d2l} tells us, via the Cauchy-Schwarz inequality, that $D^2_{U}(\sum_{i=1}^{3} \lambda_{i})(\lambda \1)$ vanishes if and only if $U$ is a symmetric matrix.   Moreover, \eqref{sumsq} implies that $\sum_{i=1}^3 (D_U\l_i(\l\1))^2$ vanishes if and only if $U$ is antisymmetric, and that in this case $D_U\l_i(\l\1)=0$ for each index $i$.

\begin{proposition}\label{proptan}  Let $C_1$ and $C_2$ be fixed $3 \times 3$ matrices, let $C_3$ be a real number, and let 
\begin{align*}N(A) = C_1 \cdot (A- \lambda \1) + C_2 \cdot (\cof A- \lambda^2 \1) + C_3(\det A - \l^3) \end{align*}
for all $A \in \R^{3 \times 3}$.  Then 
\begin{align} \label{partialpc} D_{U}P(\lambda \1) & = D_{U}N(\lambda \1) \quad \quad \forall U \in \R^{3 \times 3}
\end{align}
if and only if $C_1, C_2$ and $C_3$ are related by the equation
\begin{align}\label{hoot} (\l  - \l^2 C_3 - \l \tr C_2) \1 & = C_1 - \l C_{2}^T.
\end{align}
Moreover, \begin{align} \label{partialpc2} D^2_{U}P(\lambda \1) & = D^2_{U}N(\lambda \1) \quad \quad \forall U \in \R^{3 \times 3}
\end{align}
if and only if  $C_2$ and $C_3$ are related by the equation
\begin{align}\label{hootsmon} C_2 & = (1-\lambda C_3) \1.
\end{align}
In particular, the unique quadratic null Lagrangian $N$ satisfying both \eqref{hoot} and \eqref{hootsmon} is 
\begin{align}\label{defN} N(A) = \tr \cof A - \lambda \tr A,\end{align}
and it satisfies
\begin{align}\label{nullproperty}\int_{\om}N(\nabla u)\,dx =0 \end{align}
for all $u$ belonging to $W^{1,2}(\om,\R^3)$ such that $u=\ul$ on $\partial \om$ (in the sense of trace). 
\end{proposition}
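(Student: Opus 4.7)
The plan is to compute the first and second Fréchet derivatives of $P$ and $N$ at $\lambda\1$ explicitly, then match them. For $N$, the exact expansions
\begin{align*}
\cof(\lambda\1 + hU) &= \lambda^2\,\1 + h\lambda(\tr U\,\1 - U^T) + h^2 \cof U, \\
\det(\lambda\1 + hU) &= \lambda^3 + h\lambda^2\tr U + h^2\lambda\tr\cof U + h^3 \det U
\end{align*}
(the first exact because $\cof$ is a quadratic polynomial in the entries of its argument) furnish $D_U\cof|_{\lambda\1} = \lambda(\tr U\,\1 - U^T)$, $D^2_U\cof|_{\lambda\1} = 2\cof U$, $D_U\det|_{\lambda\1} = \lambda^2\tr U$ and $D^2_U\det|_{\lambda\1} = 2\lambda\tr\cof U$. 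Substituting into the definition of $N$,
\begin{align*}
D_U N(\lambda\1) &= (C_1 - \lambda C_2^T)\cdot U + \lambda(\tr C_2 + \lambda C_3)\tr U, \\
D^2_U N(\lambda\1) &= 2(C_2 + \lambda C_3\,\1)\cdot \cof U.
\end{align*}

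For $P$, Lemma \ref{motet}(i) and the product rule immediately yield $D_U P(\lambda\1) = 2\lambda\tr U - \lambda\tr U = \lambda\tr U$. The second derivative combines the product rule with Lemma \ref{motet}(ii) and \eqref{d2l}:
\begin{align*}
D^2_U P(\lambda\1) = 2\sum_{1\le i<j\le 3} D_U\lambda_i(\lambda\1)\,D_U\lambda_j(\lambda\1) + \lambda\sum_{i=1}^{3} D^2_U\lambda_i(\lambda\1) = (\tr U)^2 - \tr(U^2),
\end{align*}
and the standard $3\times 3$ identity $(\tr U)^2 - \tr(U^2) = 2\tr\cof U$ rewrites this as $2\,\1\cdot\cof U$.

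Matching first derivatives pointwise in $U$ gives $(C_1 - \lambda C_2^T)\cdot U = \lambda(1 - \tr C_2 - \lambda C_3)(\,\1\cdot U)$, which by non-degeneracy of the Frobenius pairing is equivalent to \eqref{hoot}. Matching second derivatives gives $(\1 - C_2 - \lambda C_3\,\1)\cdot\cof U = 0$ for every $U$; a short polarization argument (or expanding $\cof(\1 + tV)$ about $t = 0$) shows that a fixed matrix $D$ with $D\cdot\cof U = 0$ for all $U$ must vanish, yielding $C_2 = (1-\lambda C_3)\,\1$, which is \eqref{hootsmon}.

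For the concluding claim, $N$ is quadratic in $A$ exactly when the cubic term $C_3\det A$ is absent, forcing $C_3 = 0$; then \eqref{hootsmon} gives $C_2 = \1$, \eqref{hoot} gives $C_1 = -\lambda\1$, and after the additive constants cancel one recovers $N(A) = \tr\cof A - \lambda\tr A$. The identity \eqref{nullproperty} is a general fact about affine combinations of minors: for $u \in W^{1,2}(\om,\R^3)$ agreeing with $\ul$ on $\partial\om$, each of $\int_\om \nabla u\,dx$ and $\int_\om \cof\nabla u\,dx$ depends only on the boundary trace (cf.\ \cite{BCO}), so substituting $u = \ul$ and using $N(\lambda\1) = 3\lambda^2 - 3\lambda^2 = 0$ finishes the proof. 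I expect the step requiring most care to be the second-derivative calculation for $P$; everything else is essentially bookkeeping.
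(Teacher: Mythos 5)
Your proposal is correct and follows essentially the same route as the paper: a second-order expansion of both $P$ and $N$ at $\lambda\1$ (using Lemma \ref{motet} for $P$), followed by matching the $O(h)$ and $O(h^2)$ coefficients, with the same specialisation $C_3=0$, $C_2=\1$, $C_1=-\lambda\1$ for the quadratic case. The only cosmetic differences are that you apply the product rule directly to the definition of $P$ rather than the paper's algebraic rewriting of $P$, and you dispose of the condition $D\cdot\cof U=0$ for all $U$ by polarization where the paper picks $U$ with $\cof U=e_i\otimes e_j$; both are equally valid.
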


\begin{proof}
Let $A = \lambda \1 + h U$ and note that 
\begin{align*} N(A) & = \left[ U \cdot C_1 + \l \tr U \tr C_2 - \l \tr (U C_2) + \l^2 C_3 \tr U \right] h  + \left[C_2 \cdot \cof U + \l C_3 \tr \cof U \right] h^2 + h^3 \det U.\end{align*}
Next, rewrite 
\begin{align*} P(A) & = \frac{1}{2}\left( \left(\sum_{i=1}^{3} \lambda_i(A)\right)-\lambda\right)^{2} - \frac{\lambda^2}{2} - \frac{|A|^2}{2} 
\end{align*}
and, for sufficiently small $h$, write 
\begin{align*}\lambda_{i}(A)&  = \lambda + h D_{U}\lambda_{i}(\lambda \1) + \frac{h^2}{2}D^{2}_{U}\lambda_{i}(\lambda \1) + \rho_{i}(\lambda,h,U),
\end{align*}
where $\rho_{i}$ is $o(h^2)$ as $h \to 0$.    A short calculation then yields
\begin{align*} P(A) & = \lambda h \tr U + \frac{\lambda h^2}{2} \sum_{i=1}^{3} D^{2}_{U}\lambda_{i}(\lambda \1) + h^2 \sum_{1 \leq i < j \leq 3}D_{U}\lambda_i(\lambda \1)D_{U}\lambda_j(\lambda \1) + \rho \\
& = \lambda h \tr U + \frac{h^2}{4}\left(|U|^2 - \tr(U^2)\right)+\frac{h^2}{4} \left(2(\tr U)^2 -|U|^2-\tr(U^2)\right) + \rho \\
& = \lambda h \tr U + \frac{h^2}{2}\left((\tr U)^2 - \tr (U^2)\right) + \rho.\\
& = \lambda h \tr U + h^2 \tr \cof U + \rho.
\end{align*}
Here, we have used the identity $\tr \cof U = \frac{1}{2}\left((\tr U)^2 - \tr (U^2)\right)$, \eqref{d2l} and Lemma \ref{motet}(ii).  The term $\rho=\rho(\lambda,h,U)$ is $o(h^2)$ as $h \to 0$.     Comparing terms of order $h$ in this expression with the expansion for $N(A)$ given above, we see that $D_{U}P(\l 1) = D_{U}N(\l 1)$ for all $U$ if and only if \eqref{hoot} holds.   To prove the equivalence of \eqref{partialpc2} and \eqref{hootsmon}, simply compare terms of order $h^2$ to obtain
\begin{align}\label{youllhavehadyourtea}(C_2 + \l C_3 \1) \cdot \cof U = \1 \cdot \cof U\end{align}  
for all $U$, and then pick $U$ such that $\cof U= e_i \otimes e_j$.  It is then clear that \eqref{youllhavehadyourtea} is equivalent to \eqref{hootsmon}.  

Finally, to prove that $N(A)=\tr \cof A - \l \tr A$ is the unique, quadratic null Lagrangian satisfying \eqref{hoot} and \eqref{hootsmon} take $C_3=0$ in \eqref{hootsmon} and \eqref{hoot}.  The former gives $C_2=\1$, and the latter $C_1=-\l \1$, which together imply \eqref{defN}.    Equation \eqref{nullproperty} is a standard result about null Lagrangians;  to see it without recourse to general theory, simply observe that, for sufficiently smooth $\varphi$, $N(\nabla \varphi)$ can be written as a divergence.  The result then follows from the Green's theorem and an approximation argument. (The argument given in \cite[Lemma 5.5 (ii)]{Dac08} serves as a useful template.) 
\end{proof}

We remark that this establishes a simple pattern:   $N(A)$ can apparently be obtained from $P$ by noting that if $A=\textrm{Diag}\,(\lambda_1,\lambda_2,\lambda_3)$ then $P(A) = \tr \cof A - \lambda \tr A$.   

As was pointed out in the introduction, and originally conjectured in \cite{BZ15}, it would be very useful if $P$ were quasiconvex at the matrix $\l \1$.   
Our results in this direction are somewhat mixed.  We find that $P$ satisfies a condition necessary for quasiconvexity at $\l \1$, but that it does not satisfy a tractable condition sufficient condition for quasiconvexity at $\l \1$.  To be precise, (a) $P$ is rank-one convex at $\l \1$ but (b) $P$ is not polyconvex at that point.   These concepts are explained in more detail below.   We note, incidentally, that $P$ is not globally rank-one convex.   The latter is relatively easy to see:  one can immediately calculate that, for any rank-one matrix $A=a\otimes n$, $\l_1(A)=\l_2(A)=0$ and $\l_3(A)=|A|$.  In particular, $P(A) = -|A|$, which is a concave function of $A$.    The foregoing discussion is summarised in the result below.

\begin{proposition}\label{p1} Let the function $P$ be defined by \eqref{f1}.  Then 
\begin{itemize}
\item[(a)] $\l \1$ is a point of rank-one convexity of $P$, but
\item[(b)] $P$ is not polyconvex at $\lambda \1$.
\end{itemize}
\end{proposition}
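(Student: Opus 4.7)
My plan for part (a) is to read the rank-one convexity condition directly from the second-order Taylor expansion of $P$ around $\lambda \1$ that was already obtained during the proof of Proposition \ref{proptan}, namely
\[
P(\lambda \1 + hU) = \lambda h \tr U + h^2 \tr \cof U + o(h^2).
\]
This identifies $D^2_U P(\lambda \1) = 2 \tr \cof U$. For every rank-one matrix $U = a \otimes n$, every $2 \times 2$ subminor of $U$ vanishes, so $\cof U = 0$ and hence $D^2_U P(\lambda \1) = 0$. Since rank-one convexity of $P$ at $\lambda \1$ amounts to the non-negativity of this second directional derivative in rank-one directions, part (a) follows at once.

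For part (b), my strategy is to probe a hypothetical polyconvex support plane on a one-parameter family of rank-one matrices, where the nontrivial minors collapse to zero. Suppose, for contradiction, that $P$ is polyconvex at $\lambda \1$: then there exist $C_1, C_2 \in \R^{3 \times 3}$ and $C_3 \in \R$ such that, using $P(\lambda \1) = 0$,
\[
P(A) \geq C_1 \cdot (A - \lambda \1) + C_2 \cdot (\cof A - \lambda^2 \1) + C_3 (\det A - \lambda^3)
\]
for every $A \in \R^{3 \times 3}$. I would then substitute $A = t\, a \otimes n$ with $|a| = |n| = 1$ and $t \in \R$. On such $A$ the minors collapse as $\cof A = 0$ and $\det A = 0$, while by the direct computation already indicated in the text $P(A) = -\lambda |t|$. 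The right-hand side therefore reduces to an affine function of $t$, of the form $t \alpha + \beta$, where $\alpha := C_1 \cdot (a \otimes n)$ and $\beta$ is a constant independent of $t$.

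Comparing asymptotic slopes as $t \to +\infty$ forces $\alpha \leq -\lambda$, whereas the limit $t \to -\infty$ forces $\alpha \geq \lambda$; these inequalities are incompatible for $\lambda > 0$, so no such $C_1, C_2, C_3$ can exist. I do not expect either step to present a serious technical obstacle. The conceptual point in both cases is that $\cof A$ and $\det A$ vanish on rank-one matrices, which simultaneously kills the second-order term in (a) and all of the nonlinear minor contributions in (b); what remains in (b) is a purely linear function of $A$ that cannot dominate the concave, V-shaped profile $-\lambda |A|$ from below along a rank-one ray through the origin.
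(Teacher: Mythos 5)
Part (a) of your proposal has a genuine gap. From the expansion $P(\lambda\1+hU)=\lambda h\,\tr U+h^2\,\tr\cof U+o(h^2)$ you correctly read off $D^2_UP(\lambda\1)=2\,\tr\cof U$, which vanishes for rank-one $U$, but your claim that rank-one convexity of $P$ at $\lambda\1$ ``amounts to'' the non-negativity of this second directional derivative is not correct. Non-negativity of $D^2_UP(\lambda\1)$ over rank-one $U$ is only the Legendre--Hadamard condition \emph{at the single point} $\lambda\1$: it is a necessary consequence of rank-one convexity at $\lambda\1$, not a characterization of it. The notion required here --- the one that acts as a necessary condition for quasiconvexity of $P$ at $\lambda\1$, which is precisely the role part (a) plays in the paper --- involves the behaviour of $P$ along whole rank-one lines (equivalently, chords) through $\lambda\1$, not just its second-order jet there. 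Your test is in fact vacuous in the present situation because the second derivative is identically zero in rank-one directions: a function behaving like $-|A-\lambda\1|^3$ near $\lambda\1$ (cubic degeneracy of exactly the kind exhibited by $G=P-N$ in this paper) passes the same second-order test yet fails any reasonable notion of rank-one convexity at $\lambda\1$. To close the gap you must control $P$ globally along rank-one lines through $\lambda\1$; this is what the paper does, computing (with $n=e_1$) that $P(\lambda\1+a\otimes n)=\lambda\bigl(|\lambda+a_1|-\lambda\bigr)$, which is manifestly convex as a function of the rank-one perturbation.

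Part (b) is correct, and your route is genuinely different from, and simpler than, the paper's. By testing the putative polyconvex support inequality on the ray $A=t\,a\otimes n$ through the origin, where $\cof A=0$, $\det A=0$ and $P(A)=-\lambda|t|$, you reduce it to an affine bound $t\alpha+\beta\le-\lambda|t|$ for all $t\in\R$, and the two asymptotic slope conditions $\alpha\le-\lambda$ (from $t\to+\infty$) and $\alpha\ge\lambda$ (from $t\to-\infty$) are incompatible since $\lambda>0$. This bypasses the paper's multi-step reduction (first arguing $C_3=0$, then forcing $C_1=0$ via rank-one matrices, then identifying $C_2=\tfrac12\1$ from the first-order tangency $D_UP(\lambda\1)=\lambda\tr U$, and only then reaching a contradiction on rank-one matrices); in particular your argument never needs to eliminate $C_2$ or $C_3$ at all, since their contributions are swallowed by the constant $\beta$. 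The only small point to make explicit is that $P(\lambda\1)=0$ and that the same fixed $a\otimes n$ is used for both limits, which you do.
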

\begin{proof} \noindent\textbf{(a)}:  To show (a) we only need to verify that $P(\l \1+ a \otimes n)$ is convex as a function of the rank-one matrix $a \otimes n$.  Without loss of generality, we may choose coordinates such that $n=e_1$. A calculation then shows that, if the component of $a$ in the $e_1$ direction is $a_1$, the following expression holds:
\begin{align*} P(\l \1 + a \otimes n) & = \l (|\l+a_1|-\l).
\end{align*}
This is clearly convex in $a \otimes n$, which proves part (a) of the lemma.

\vspace{1mm}
\noindent\textbf{(b)} Assume for a contradiction that $\lambda \1$ is a point of polyconvexity of $P$.   This means that there is some point $(C_1,C_2,C_3) $ in $\R^{19}$ such that
\begin{align}\label{pc_at_R_lambda} P(A) \geq P(\lambda \1) + C_1 \cdot (A - \lambda \1) + C_2 \cdot (\cof A - \lambda^2 \1) + C_3(\det A - \lambda^3)
\end{align}
for all $A$ in $\R^{3 \times 3}$.  Note that $C_3$ has to be zero because $P$ is at most quadratic.  Next, take $A$ to be a rank-one matrix such that $C_1 \cdot A=\textrm{sign}\;((C_1)_{ij})t$ for a given pair $i,j$, where $t$ is a positive parameter to be chosen shortly.   Recall that, when $A$ is a rank-one matrix, $P(A)=-|A|$.   If $(C_1)_{ij} \neq 0$, this gives
\begin{align*} -t & \geq |(C_1)_{ij}|t - \l \tr C_1 -\l^2 \tr C_2,
\end{align*}
which is easily contradicted by taking $t$ to be sufficiently large.   Therefore $C_1 = 0,$ leaving
\begin{align}\label{reduced1}P(A) & \geq P(\l \1) + C_2 \cdot (\cof A - \l^2 \1).
\end{align}
By considering $A=\l \1 + h U$ for arbitrary $U$ in $\R^{3 \times 3}$ and small $h$, it is straightforward to show that this implies $D_U P(\l \1) = D_U \bar{N}(\l \1)$,  where 
\begin{align}\label{chizlowska} \bar{N}(A) & = C_2 \cdot (\cof A - \l^2 \1). 
\end{align}
In the course of Proposition \ref{proptan} it is shown that $D_U P(\l \1)=\l \tr U$, so \eqref{chizlowska} implies
\begin{align*}
\l \tr U & = \l (\tr C_2 \tr U - \tr (U C_2))\end{align*}
for all $U$.  Rearranging this gives
\[ C_2 = (\tr C_2 - 1) \1, \]
so that $C_2=(1/2)\1$.    Putting this into \eqref{reduced1} gives
\[ P(A) \geq \frac{\tr \cof A - 3 \l^2}{2},\]
 which is easily contradicted by taking $A$ to be of rank $1$, applying the observation that $P(A)=-|A|$ for such $A$, and letting $|A| \to \infty$.   This concludes the proof.
\end{proof}


Next, with $N$ as in Proposition \eqref{defN}, we define
\begin{align}\label{defG} G(A) := P(A) - N(A)\end{align}
for all $3 \times 3$ matrices $A$.  We know by equation \eqref{partialpc} in Proposition \ref{proptan} that $P$ and $N$ are tangent at $\lambda \1$, so clearly $D_{U}G(\lambda \1) = 0$ for all $U$.  Moreover, by \eqref{hootsmon},  we also have that $D^2_{U}G(\l \1)=0$ for all $U$.
Thus $G$ behaves like $|A-\lambda \1|^3$ in a neighbourhood of $\lambda \1$, and this is a key feature which enables us to find new lower bounds for $\l_\textrm{crit}$.  The technique for doing so is described in the next section.  We also record the following useful property of $G$, which flows directly from \eqref{nullproperty}:
\begin{align}\label{intpig}
\int_{\om}P(\nabla u)\,dx = \int_{\om} G(\nabla u)\,dx 
\end{align} 
for all $u$ belonging to $W^{1,2}(\om,\R^3)$ such that $u=\ul$ on $\partial \om$ (in the sense of trace).

\section{New lower bounds on $\lambda_{\textrm{crit}}$}\label{plotselm}

Let the stored-energy function $W: \R^{3 \times 3} \to [0,+\infty]$ be given by 
\begin{align}\label{w}W(A) &= |A|^{q} + Z(\cof A) + h(\det A) 
\end{align} 
where $Z: \R^{3 \times 3} \to [0,+\infty)$ is convex and $h: \R \to [0,+\infty]$ has the following properties:
\begin{itemize}
\item[(H1)] $h$ is convex and $C^1$ on $(0,+\infty)$;
\item[(H2)] $\lim_{t \to 0+}h(t) = +\infty$ and $\liminf_{t \to +\infty}\frac{h(t)}{t} > 0$;
\item[(H3)] $h(t)=+\infty$ if $t \leq 0$.
\end{itemize}
The exponent $q$ satisfies $2 < q < 3$.  Let 
\begin{align*}
I(u) = \int_{\om} W(\nabla u) \, dx 
\end{align*} 
and define the class of admissible maps as
\begin{align*}\sca_{\l}=\{u \in W^{1,q}(\om;\R^3): \ u = \ul \ \textrm{on} \ \partial \om, \   I(u) < +\infty\}.
\end{align*}
The following argument is straightforward and can be found in \cite[Section 3]{BZ15}.  We include it here both for completeness and as a means of deriving the function $P$ defined by \eqref{f1}.  Applying \cite[Lemma A.1]{MSS96} to $A \mapsto |A|^q$ gives
\begin{equation}\label{c:q}
|\nabla u|^q \geq |\l\1|^q+q|\l\1|^{q-2} \l\1\cdot (\nabla u -\l\1)+\kappa |\nabla u-\l\1|^q,
\end{equation}
where
\begin{equation}\label{j:kappalimits}
2^{2-q} \leq \kappa \leq  q2^{1-q}  .\end{equation} 
Therefore, by \eqref{c:q} and by appealing to the convexity of $Z$ and $h$, we obtain
\begin{eqnarray}\nonumber 
W(\nabla u) &\geq & W(\nabla u_{\lambda}) + q|\l \1|^{q-2}\l \1\cdot(\nabla u    - \l \1) + \kappa |\nabla u-\l\1|^{q}
\\\label{ineq0:3d}
&+& 2\gamma \l \1 \cdot (\nabla u    - \l \1) + \gamma |\nabla u - \l \1|^{2}
\\\nonumber
&+& D_{A}Z(\cof \l \1) \cdot (\cof \nabla u - \cof \l \1)\\ \nonumber
&+& h'(\l^{3})(\det \nabla u-\l^3), 
\end{eqnarray}
for any $u \in \sca_{\l}$.   Integrating \eqref{ineq0:3d} and using the facts that both $\nabla u$ and $\cof \nabla u$ are null Lagrangians in 
$W^{1,q}(\om,\R^{3})$ for $q \geq 2$, we obtain
\begin{align}\nonumber I(u) - I(\ul) & \geq \int_{\om} \kappa |\nabla u - \lambda \1|^q + h'(\lambda^3)\lambda_1 \lambda_2 \lambda_3 \,dx  \\ \label{balladeno2} & = 
\int_{\om}   \kappa |\nabla u - \lambda \1|^q + h'(\lambda^3)\hat{\lambda}_1 \hat{\lambda}_2 \hat{\lambda}_3 \,dx + \lambda h'(\lambda^3) \int_{\om} P(\nabla u) \,dx.
\end{align}
In deriving this, it may help to recall the identity
\begin{align}\label{greenalder} \lambda_1 \lambda_2 \lambda_3 = \hat{\lambda}_{1}\hat{\lambda}_2 \hat{\lambda}_{3} + \lambda \sum_{1 \leq i < j \leq 3} \lambda_i \lambda_j - \lambda^2 \sum_{i=1}^{3} \lambda_i + \l^3,
\end{align}
where the notation $\lambda_i$ abbreviates $\lambda_{i}(A)$ and, for each $i$,
$\hat{\lambda}_{i}:=\lambda_{i} - \lambda$.   


Continuing from \eqref{balladeno2}, we split the first term into two equal parts and recall the property of $G$ and $P$ given in \eqref{intpig}, thereby obtaining:
\begin{align*}I(u) - I(\ul)  \geq  \int_{\om} (\kappa/2) |\nabla u - \lambda \1|^q + & h'(\lambda^3)\hat{\l}_1 \hat{\l}_2 \hat{\l}_3 \,dx  +  \\ & \quad + \int_{\om} (\kappa/2) |\nabla u - \lambda \1|^q  +  \lambda h'(\lambda^3)G(\nabla u) \,dx \\
\geq  \int_{\om} (\kappa/2) |\Lambda(\nabla u) - \Lambda_0|^q + & h'(\lambda^3)\hat{\l}_1 \hat{\l}_2 \hat{\l}_3 \,dx  +  \\ & \quad + \int_{\om} (\kappa/2) |\nabla u - \lambda \1|^q  +  \lambda h'(\lambda^3)G(\nabla u) \,dx.
\end{align*}
Here, $\Lambda(A)$ is the $3-$vector with entries $\lambda_i(A)$ and $\lambda_{0}= (\lambda,\lambda,\lambda)$.  We have used the well-known inequality $|A-\lambda \1| \geq |\Lambda(A)-\Lambda_0|$.

In keeping with the notation introduced in \cite[Lemma 3.2]{BZ15}, let
\begin{align}\label{defF1}\scf_{1}(\Lambda)& =(\kappa/2)|\Lambda - \Lambda_0|^q + h'(\lambda^3)\hat{\lambda}_1 \hat{\lambda}_2 \hat{\lambda}_3,
\end{align}
and, in contrast to the approach of \cite{BZ15}, let
\begin{align*}\scf_{2}(A)& =(\kappa/2)|A-\lambda \1|^q + \lambda h'(\lambda^3)G(A).
\end{align*}

In these terms we then have
\begin{align}
\label{estf1f2}I(u)-I(\ul) & \geq \int_{\om} \scf_1 (\Lambda)\, dx + \int_{\om} \scf_2 (\nabla u) \,dx.
\end{align}
The sign of the first integral can be controlled by appealing to the following result:
\begin{lemma}(\cite[Lemma 3.3]{BZ15})\label{ff1}  The function $\scf_{1}(\Lambda)$ defined in \ref{defF1} is pointwise nonnegative on $\R^{+++}$ provided $h'(\l^3) > 0$ and 
\begin{align}\label{est1} \frac{(\kappa/2)}{h'(\lambda^3)\lambda^{3-q}} & \geq (q-2)^{(q-2)/2} q^{-q/2}.
\end{align}
\end{lemma}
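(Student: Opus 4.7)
The plan is to reduce the pointwise claim to a constrained optimisation in three real variables. Writing $y_i := \lambda_i - \lambda = \hat{\lambda}_i$, the condition $\Lambda \in \R^{+++}$ becomes $y_i > -\lambda$, and the target inequality reads $(\kappa/2)|y|^q + h'(\lambda^3)\,y_1 y_2 y_3 \geq 0$. Since this is immediate whenever $y_1 y_2 y_3 \geq 0$, the proof reduces to estimating
\begin{align*}
M(\lambda) := \sup \left\{ \frac{-y_1 y_2 y_3}{(y_1^2+y_2^2+y_3^2)^{q/2}} : y_i > -\lambda,\ y_1 y_2 y_3 < 0 \right\}
\end{align*}
and showing $M(\lambda) \leq (\kappa/2)/h'(\lambda^3)$, which rearranges precisely to \eqref{est1}.

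The critical structural observation is that the ratio defining $M(\lambda)$ is positively homogeneous of degree $3-q > 0$; hence the substitution $y \mapsto ty$ for $t>1$ strictly increases it whenever $ty$ remains admissible. Because $y_1 y_2 y_3 < 0$ forces at least one coordinate to be negative, one can always rescale upward until the most negative coordinate hits $-\lambda$. Using the $S_3$ symmetry of the objective and the constraint, I would fix $y_1 = -\lambda$ without loss of generality. Two sub-cases then remain, both consistent with $y_1 y_2 y_3 < 0$: \emph{case (a)} $y_2, y_3 > 0$, and \emph{case (b)} $y_2, y_3 \in (-\lambda, 0)$.

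For case (a) I would maximise $\lambda y_2 y_3/(\lambda^2 + y_2^2 + y_3^2)^{q/2}$ over the open quadrant. The elementary bound $y_2 y_3 \leq \tfrac12(y_2^2 + y_3^2)$, sharp when $y_2 = y_3$, collapses the problem to a single variable $s := y_2^2 + y_3^2$; a standard derivative calculation locates the unique critical point at $s = 2\lambda^2/(q-2)$ and evaluates the supremum to $\lambda^{3-q}(q-2)^{(q-2)/2}q^{-q/2}$, which is precisely the constant in \eqref{est1}. In case (b) the same derivative test places the unconstrained critical $s$ outside $(0, 2\lambda^2)$ (because $\lambda/\sqrt{q-2} > \lambda$ when $q < 3$), so the supremum over the bounded square is attained at its corner $y_2 = y_3 = -\lambda$ and equals the value $\lambda^{3-q}/3^{q/2}$. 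A brief comparison of these two numbers (e.g.\ checking that $(q-2)^{(q-2)/2}(3/q)^{q/2} \geq 1$ on $(2,3)$) then confirms that case (a) dominates, giving $M(\lambda) = \lambda^{3-q}(q-2)^{(q-2)/2}q^{-q/2}$ and proving the lemma.

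I expect the delicate point to be verifying rigorously that the constrained supremum is attained at the symmetric interior critical point of case (a) rather than at some asymmetric configuration or at infinity; since the ratio vanishes as $|y_2|, |y_3| \to \infty$ (as $q>2$ forces the denominator to grow faster than the numerator), this is a compactness-plus-symmetrisation argument. Once handled, the remainder is algebraic simplification and the routine comparison of cases (a) and (b).
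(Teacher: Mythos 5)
Your argument is correct and complete in its essentials: the reduction to bounding $\sup\{-y_1y_2y_3/|y|^q\}$ over $y_i>-\lambda$, the upward-rescaling step (valid because the ratio is homogeneous of degree $3-q>0$ and only the negative coordinates are constrained), the symmetrisation $y_2y_3\le\tfrac12(y_2^2+y_3^2)$ reducing case (a) to a one-variable maximisation with critical point $s=2\lambda^2/(q-2)$ and value $\lambda^{3-q}(q-2)^{(q-2)/2}q^{-q/2}$, and the corner value $\lambda^{3-q}3^{-q/2}$ in case (b) all check out, and the comparison $(q-2)^{(q-2)/2}(3/q)^{q/2}\ge 1$ on $(2,3)$ is true (the logarithm of the left side is decreasing in $q$ and vanishes at $q=3$), so case (a) indeed supplies the constant in \eqref{est1}. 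Note that the paper itself gives no proof of this lemma — it is quoted from \cite[Lemma 3.3]{BZ15} — so your write-up serves as a self-contained derivation; the extremal configuration you identify (one shifted singular value at the constraint $-\lambda$, i.e.\ $\lambda_1=0$, and the other two equal) is exactly what produces the cited constant, so your route is in substance the computation underlying the quoted result rather than a genuinely different one.
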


The pointwise nonnegativity of $\scf_2$, on the other hand, relies primarily on the argument given in Lemma \ref{suffest2} below.  In short, the idea is that $\scf_2(\nabla u)$ is dominated by $|\nabla u-\lambda|^q$ for both small and large values of $|\nabla u-\lambda|$ provided $h'(\lambda^3)$ is itself not too large.  Thus we generate a new upper bound on $\lambda$ which must be imposed along with \eqref{est1} in order to guarantee that $I(u) \geq I(\ul)$.

\begin{lemma}\label{suffest2} With $G$ as defined in \eqref{defG} and for any positive constant $c_0$, let
\begin{align}\label{defm2}M_{2}(\lambda,c_0) & =\sup \left\{\frac{|G(A)|}{|A-\lambda \1|^2}: |A-\lambda \1| \geq c_0 \right\} \\
 \label{defm3}M_{3}(\lambda,c_0) &  = \sup\left\{ \frac{|G(A)|}{|A-\lambda \1|^3}:   0 < |A-\lambda \1| < c_0 \right\}. 
\end{align}
 Then $\scf_2(A) \geq 0$ for all $3 \times 3$ matrices $A$ provided 
\begin{align}\label{est2}\lambda h'(\lambda^3)  & \leq \min_{c_0}\left\{ (\kappa/2) \max\left\{ \frac{c_0^{q-2}}{M_{2}(\lambda,c_0)}, \frac{{c_0}^{q-3}}{M_{3}(\lambda,c_0)}\right\}\right\}.
\end{align} 
\end{lemma}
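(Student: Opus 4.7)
My plan is to reduce the pointwise inequality $\scf_{2}(A)\geq 0$ to a scalar comparison between $|G(A)|$ and $|A-\lambda\1|^{q}$, split the matrix space at an auxiliary threshold $c_{0}>0$ into a \emph{far} region $\{|A-\lambda\1|\geq c_{0}\}$ and a \emph{near} region $\{|A-\lambda\1|<c_{0}\}$, and to control $|G|$ on each piece using $M_{2}$ and $M_{3}$ respectively.

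Since the right-hand side of \eqref{est2} is nonnegative, the only case of interest is $h'(\lambda^{3})\geq 0$, and within this case the only matrices for which $\scf_{2}(A)\geq 0$ is not automatic are those with $G(A)<0$; the task there is to verify $\lambda h'(\lambda^{3})\,|G(A)|\leq (\kappa/2)\,|A-\lambda\1|^{q}$. In the far region, \eqref{defm2} bounds $|G(A)|$ by $M_{2}(\lambda,c_{0})\,|A-\lambda\1|^{2}$, and since $q>2$ one has $|A-\lambda\1|^{q}\geq c_{0}^{q-2}\,|A-\lambda\1|^{2}$, yielding the sufficient condition $\lambda h'(\lambda^{3})\leq (\kappa/2)\,c_{0}^{q-2}/M_{2}(\lambda,c_{0})$. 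In the near region, \eqref{defm3} bounds $|G(A)|$ by $M_{3}(\lambda,c_{0})\,|A-\lambda\1|^{3}$, and since $q<3$ the factor $|A-\lambda\1|^{3-q}$ is strictly smaller than $c_{0}^{3-q}$, yielding the sufficient condition $\lambda h'(\lambda^{3})\leq (\kappa/2)\,c_{0}^{q-3}/M_{3}(\lambda,c_{0})$. Imposing both conditions simultaneously and then optimising over $c_{0}>0$ recovers \eqref{est2}.

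The step that is more than bookkeeping is the finiteness of $M_{3}(\lambda,c_{0})$ as $c_{0}\to 0$, which is equivalent to the claim $|G(A)|=O(|A-\lambda\1|^{3})$ as $A\to\lambda\1$. This is exactly the reason the null Lagrangian $N$ was constructed in Proposition \ref{proptan}: the relations \eqref{hoot} and \eqref{hootsmon} force $D_{U}G(\lambda\1)=0$ and $D^{2}_{U}G(\lambda\1)=0$ for every $U\in\R^{3\times 3}$, so the Taylor expansion of $G$ about $\lambda\1$ has vanishing linear and quadratic parts and hence begins at cubic order. Finiteness of $M_{2}(\lambda,c_{0})$ is immediate because $P$ and $N$ are each polynomials of degree at most two in the entries of $A$, so $|G(A)|$ grows no faster than $|A-\lambda\1|^{2}$ at infinity. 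No step appears to be a serious obstacle once Proposition \ref{proptan} is in hand; the only real subtlety is keeping track of the way $q-2>0$ versus $q-3<0$ pulls the threshold $c_{0}$ in opposite directions during the final optimisation.
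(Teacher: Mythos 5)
Your reduction to the scalar inequality $\lambda h'(\lambda^3)\,|G(A)|\leq(\kappa/2)|A-\lambda\1|^{q}$, the two-regime use of \eqref{defm2} and \eqref{defm3}, and your remarks on finiteness of $M_2$ and $M_3$ (quadratic growth of $G$, vanishing of $D_UG(\lambda\1)$ and $D^2_UG(\lambda\1)$) are all sound. The gap is in the last sentence of your second paragraph. Because you fix \emph{one} threshold $c_0$ for all $A$ and must cover both regions with it, your argument needs \emph{both} inequalities at that $c_0$, i.e. $\lambda h'(\lambda^3)\leq(\kappa/2)\min\{f_1(c_0),f_2(c_0)\}$ with $f_1(c_0)=c_0^{q-2}/M_2(\lambda,c_0)$ and $f_2(c_0)=c_0^{q-3}/M_3(\lambda,c_0)$; optimising over $c_0$ then proves sufficiency of $\lambda h'(\lambda^3)\leq(\kappa/2)\sup_{c_0}\min\{f_1,f_2\}$. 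But the hypothesis \eqref{est2} you are asked to use is $\lambda h'(\lambda^3)\leq(\kappa/2)\inf_{c_0}\max\{f_1,f_2\}$, and for two functions of a single shared variable there is no general inequality between $\inf\max$ and $\sup\min$ (take $f_1\equiv 0$, $f_2\equiv 1$), so ``optimising over $c_0$ recovers \eqref{est2}'' is an assertion, not a deduction: as written you have proved a statement whose hypothesis is possibly more restrictive than \eqref{est2}.

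There are two ways to close this. One is to prove that $M_2(\lambda,\cdot)$ is nonincreasing and $M_3(\lambda,\cdot)$ is nondecreasing, whence (using $q-2>0$, $q-3<0$) $f_1$ is nondecreasing, $f_2$ is nonincreasing, $f_1\to\infty$ at $+\infty$ and $f_2\to\infty$ at $0+$; then both $\sup_{c_0}\min\{f_1,f_2\}$ and $\inf_{c_0}\max\{f_1,f_2\}$ equal the common value at the unique crossing point $c^*$, and your route goes through. This monotonicity/crossing analysis is precisely the content of Lemma \ref{alassio}, which you neither invoke nor reprove. The other, which is what the paper does and which avoids the interchange altogether, is to choose the threshold \emph{pointwise}: given $A\neq\lambda\1$, set $c_0=|A-\lambda\1|$ in \eqref{est2}, so that $\lambda h'(\lambda^3)\leq(\kappa/2)\max\{f_1(c_0),f_2(c_0)\}$ for that particular $c_0$, and use only whichever branch attains the maximum — if it is $f_1$, the bound $|G(A)|\leq M_2(\lambda,c_0)|A-\lambda\1|^2$ is available since $|A-\lambda\1|\geq c_0$; if it is $f_2$, the cubic bound applies — giving $\scf_2(A)\geq 0$ in either case without ever needing both inequalities at the same $c_0$.
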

\begin{proof}By \eqref{partialpc2}, the quantity $M_3(\lambda,c_0)$ is finite and, in view of the at most quadratic growth of $G$, $M_3(\l,c_0)$ is uniformly bounded as a function of $c_0$.   $M_2(\l,c_0)$ has the same properties, but this time we appeal to the fact that $D_U G(\l \1)=0$ for all $U$.  

 Let $A \neq \l \1$ and let $c=|A-\lambda \1|$.  It is immediately clear that
\begin{align*}
\scf_2(A) \geq \frac{\kappa}{2}|A-\l \1|^q -\l h'(\l^3) |G(A)|,
\end{align*}
and we express the right-hand side in two ways:
\begin{align}\label{bbritten}
\frac{\kappa}{2}|A-\l \1|^q -\l h'(\l^3) |G(A)| &= \left(\frac{\kappa}{2}|A-\l \1|^{q-j} -\l h'(\l^3) \frac{|G(A)|}{|A-\l \1|^j}\right)|A-\l \1|^j,
\end{align}
where $j$ is either $2$ or $3$.   Now let $w:=2\l h'(\l^3)/\kappa$ and suppose that \eqref{est2} holds.  Then, in particular, 
\begin{align}\label{bakingparchment} w & \leq \max\{c^{q-2}/M_2(\l,c),  c^{q-3}/M_3(\l,c)\}.
\end{align}
If the maximum in \eqref{bakingparchment} is given by $c^{q-2}/M_2(\l,c)$ then $w M_2(\l,c) \leq c^{q-2}$, and hence $\frac{\kappa}{2} c^{q-2} - \l h'(\l^3) \frac{|G(A)|}{c^{q-2}} \geq 0$.  Using \eqref{bbritten} with $j=2$, we see that $\scf_2(A) \geq 0$.  If the maximum in \eqref{bakingparchment} is given by $c^{q-3}/M_3(\l,c)$ then we can argue similarly, this time using \eqref{bbritten} with $j=3$, to conclude that $\scf_2(A) \geq 0$.    
\end{proof}

\begin{lemma}\label{alassio} Let $f_1(c_0) = \frac{c_0^{q-2}}{M_2(\l,c_0)}$, $f_2(c_0) = \frac{c_0^{q-3}}{M_3(\l,c_0)}$
and define $\xi(c_0)=\max\{f_1(c_0),f_2(c_0)\}$.   Then $f_1$ is nondecreasing, $f_2$ is nonincreasing, and 
\begin{align*} \inf_{c_0 > 0}\xi(c_0) & = M_2(\l,c^*)^{q-3} M_3 (\l,c^*)^{2-q}
 \end{align*}
where $c^*$ is the unique fixed point of the function $c_0 \mapsto \frac{M_2(\l,c_0)}{M_3(\l,c_0)}$.
\end{lemma}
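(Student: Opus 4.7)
The plan is to first establish the monotonicity of the building blocks $M_2$ and $M_3$ directly from their definitions, then deduce the claimed monotonicity of $f_1$ and $f_2$, and finally use an elementary max-of-monotone-functions argument to locate the infimum of $\xi$ at the crossing point.

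For the monotonicity, I would observe that in the definition \eqref{defm2} the set $\{A: |A-\lambda\1|\ge c_0\}$ shrinks as $c_0$ increases, hence $c_0\mapsto M_2(\lambda,c_0)$ is nonincreasing; by contrast, in \eqref{defm3} the set $\{A: 0<|A-\lambda\1|<c_0\}$ grows with $c_0$, so $c_0\mapsto M_3(\lambda,c_0)$ is nondecreasing. Combining this with the fact that $q-2>0$ and $q-3<0$, it follows that $f_1(c_0)=c_0^{q-2}/M_2(\lambda,c_0)$ is the product (or quotient) of two nondecreasing positive factors, hence nondecreasing, while $f_2(c_0)=c_0^{q-3}/M_3(\lambda,c_0)$ is a quotient whose numerator decreases and whose denominator increases, hence nonincreasing. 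A short computation gives
\begin{equation*}
\frac{f_1(c_0)}{f_2(c_0)}=\frac{c_0\,M_3(\lambda,c_0)}{M_2(\lambda,c_0)},
\end{equation*}
so $f_1(c_0)=f_2(c_0)$ holds exactly when $c_0=M_2(\lambda,c_0)/M_3(\lambda,c_0)$, i.e.\ at a fixed point of the map $\phi(c_0):=M_2(\lambda,c_0)/M_3(\lambda,c_0)$.

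Uniqueness of such a fixed point is immediate: $\phi$ is the quotient of a nonincreasing and a nondecreasing positive function, hence itself nonincreasing, and therefore $c_0\mapsto c_0-\phi(c_0)$ is strictly increasing and can vanish at most once. Existence is the only potential obstacle; I would argue it by checking the boundary behaviour, namely that $c_0-\phi(c_0)<0$ for $c_0$ small (where $\phi(c_0)$ is bounded below by a positive constant since $M_2>0$ and $M_3$ is finite) and $c_0-\phi(c_0)\to+\infty$ as $c_0\to\infty$ (since $\phi$ stays bounded: $M_2$ is bounded above because $G$ has vanishing Taylor polynomial of orders $\le 1$ at $\lambda\1$ and grows at most quadratically, while $M_3$ is bounded below by the nonzero third-order behaviour of $G$ at $\lambda\1$). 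An intermediate-value/continuity argument then produces the unique $c^*$.

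With $c^*$ in hand, I would conclude by the standard observation that the maximum of a nondecreasing and a nonincreasing function is minimised at their crossing. Explicitly, for $c_0\le c^*$ monotonicity gives $\xi(c_0)\ge f_2(c_0)\ge f_2(c^*)=\xi(c^*)$, while for $c_0\ge c^*$ it gives $\xi(c_0)\ge f_1(c_0)\ge f_1(c^*)=\xi(c^*)$. Finally, the value of the infimum is computed by substituting the fixed-point identity $c^*=M_2(\lambda,c^*)/M_3(\lambda,c^*)$ into $f_1(c^*)$:
\begin{equation*}
f_1(c^*)=\frac{(c^*)^{q-2}}{M_2(\lambda,c^*)}=\frac{\bigl(M_2(\lambda,c^*)/M_3(\lambda,c^*)\bigr)^{q-2}}{M_2(\lambda,c^*)}=M_2(\lambda,c^*)^{q-3}M_3(\lambda,c^*)^{2-q},
\end{equation*}
which is the claimed identity. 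The one step that requires genuine care, rather than bookkeeping, is the existence of $c^*$; everything else is a direct consequence of the monotonicity structure.
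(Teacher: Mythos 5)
Your proposal is correct and takes essentially the same route as the paper: monotonicity of $M_2(\lambda,\cdot)$ and $M_3(\lambda,\cdot)$ read off from their definitions, hence $f_1$ nondecreasing and $f_2$ nonincreasing, a unique crossing point identified with the fixed point of $c_0\mapsto M_2(\lambda,c_0)/M_3(\lambda,c_0)$, and the min-of-max evaluation at that point. The only cosmetic difference is that you obtain existence and uniqueness of the crossing through the strictly increasing map $c_0\mapsto c_0-M_2(\lambda,c_0)/M_3(\lambda,c_0)$, whereas the paper argues directly from the limits $f_1(c_0)\to+\infty$ as $c_0\to+\infty$ and $f_2(c_0)\to+\infty$ as $c_0\to 0^+$; both versions implicitly use the (easily checked) continuity of $M_2$ and $M_3$ in $c_0$.
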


\begin{proof} Let $M_2(c_0)=M_2(\l,c_0)$ and $M_3(c_0)=M_3(\l,c_0)$ for brevity.  It is clear from their definitions that $M_2(c_0)$ and $M_3(c_0)$ are nonincreasing and nondecreasing respectively.   From this and the fact that $2 < q < 3$, it follows that $f_1$ is nondecreasing and $f_2$ is nonincreasing.   Since $f_1(c_0) \to +\infty$ as $c_0 \to +\infty$ and $f_2(c_0) \to +\infty$ as $c_0 \to 0+$, there is a unique point $c^*$ such that $f_1(c^*) = f_2(c^*)$,  
\begin{align*} \xi(c_0)=\left\{\begin{array}{ll} f_2(x_0) & \textrm{if} \ x_0 \leq c^* \\
f_1(c_0) & \textrm{if} \ x_0 \geq c^*\end{array}\right.\end{align*}
and where, moreover,  $\xi(c^*)$ is the global minimum of $\xi$ on $\R^+$.   It is straightforward to see that the condition $f_1(c^*)=f_2(c^*)$ is equivalent to the condition $c^*=M_2(c^*)/M_3(c^*)$, and that $f_1(c^*)=M_2(c^*)^{q-3} M_3(c^*)^{2-q}$.
\end{proof}

We are now in a position to state the main theorem of this section.

\begin{theorem}\label{printingpress} Let $W$ be as in \eqref{w} and suppose that $\lambda$ is chosen so that 
\begin{align}\label{estsuper} 0 \leq \lambda^{3-q} h'(\lambda^3) & \leq  \min\{ (\kappa/2) (q-2)^{(2-q)/2} q^{q/2}, (\kappa/2)\lambda^{2-q}M_2(\l,c^*)^{q-3} M_3 (\l,c^*)^{2-q}\},
\end{align}
where $c^*$ is the unique fixed point of the function $c_0 \mapsto M_2(\l,c_0)/M_3(\l,c_0)$.   Then $I(u) \geq I(\ul)$ for any map $u \in H^{1}(\om,\R^3)$ whose boundary values agree with those of $\ul$ in the sense of trace.  In particular, the largest possible value $\lambda_0$ satisfying \eqref{estsuper} is a lower bound for $\lambda_{\textrm{crit}}$.  Moreover, if \eqref{estsuper} holds with strict inequality then there is $C=C(\om) >0$ such that 
\begin{align}\label{ocular}  I(u) - I(\ul) \geq C \int_{\om} |\nabla u - \l 1|^q  + |G(\nabla u)|\,dx.\end{align}
for all admissible $u$.  
\end{theorem}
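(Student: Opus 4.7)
My plan is to feed the decomposition \eqref{estf1f2} into the two pointwise nonnegativity results already in place, Lemmas \ref{ff1} and \ref{suffest2}; the two clauses of the $\min$ in \eqref{estsuper} have been arranged to match their hypotheses exactly. The first clause rearranges to \eqref{est1}, so Lemma \ref{ff1} yields $\scf_1(\Lambda(\nabla u)) \geq 0$ a.e.\ on $\R^{+++}$ (the positivity of the singular values a.e.\ follows from (H3) together with $I(u) < +\infty$, which forces $\det \nabla u > 0$ a.e.). For the second clause I first invoke Lemma \ref{alassio} to identify $\min_{c_0}\xi(c_0) = M_2(\l,c^*)^{q-3} M_3(\l,c^*)^{2-q}$; multiplying through by $\l^{q-2}$ then recovers precisely \eqref{est2}, so Lemma \ref{suffest2} gives $\scf_2(\nabla u) \geq 0$ a.e. Substituting both back into \eqref{estf1f2} delivers $I(u) \geq I(\ul)$, and the claim that the largest such $\l_0$ bounds $\l_{\textrm{crit}}$ from below is immediate from the definition of $\l_{\textrm{crit}}$.

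For the coercivity estimate \eqref{ocular} I exploit the strict inequality to build slack into the constants. The case $G(A) \geq 0$ is trivial: $\scf_2(A) = (\kappa/2)|A-\l\1|^q + \l h'(\l^3)|G(A)|$, whence $\scf_2(A) \geq \eta|A-\l\1|^q + \l h'(\l^3)|G(A)|$ for any $\eta \leq \kappa/2$. For the case $G(A) < 0$, the key observation is that $M_2(\l,c^*)$, $M_3(\l,c^*)$ and $c^*$ depend on $\l$ but not on $\kappa$, so I can choose $\eta, \eta' > 0$ small enough that the strict version of the second clause of \eqref{estsuper} continues to hold after simultaneously replacing $\kappa/2$ by $\kappa/2 - \eta$ and inserting a factor $1+\eta'$ on the left. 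Lemma \ref{suffest2} with these modified constants gives $(\kappa/2 - \eta)|A-\l\1|^q + \l h'(\l^3)(1+\eta')G(A) \geq 0$, so that
\begin{align*}
\scf_2(A) &= \bigl[(\kappa/2-\eta)|A-\l\1|^q + \l h'(\l^3)(1+\eta')G(A)\bigr] + \eta|A-\l\1|^q - \eta'\l h'(\l^3)G(A) \\
&\geq \eta|A-\l\1|^q + \eta' \l h'(\l^3)|G(A)|,
\end{align*}
using $-G(A) = |G(A)|$ in the last step. The two cases combine to a universal pointwise lower bound for $\scf_2$, and integrating while retaining the nonnegativity of $\scf_1$ yields \eqref{ocular}.

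The only technical wrinkle I expect is the consistency of the two perturbations $\eta, \eta'$ in the coercivity argument: the second clause of \eqref{estsuper} must absorb both the shrinkage of $\kappa$ and the enlargement of the $G$-coefficient. Since $M_2, M_3, c^*$ do not depend on $\kappa$, this is a standard open-condition argument around the strict inequality in \eqref{estsuper} and I do not anticipate any essential difficulty beyond bookkeeping of which clause of the $\min$ is binding.
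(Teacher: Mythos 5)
Your proof of the nonnegativity part coincides with the paper's: both clauses of the $\min$ in \eqref{estsuper} are fed through the decomposition \eqref{estf1f2} into Lemma \ref{ff1} and into Lemmas \ref{suffest2}--\ref{alassio} respectively, exactly as you describe. For the coercivity estimate \eqref{ocular}, however, you take a genuinely different route. The paper extracts the $|\nabla u-\l\1|^q$ term from $\scf_1$: strictness in the \emph{first} clause gives $\scf_1(A)\geq \eps|\Lambda(A)-(\l,\l,\l)|^q$, and this is converted into $\beta(\om)\int_\om|\nabla u-\l\1|^q\,dx$ via the rigidity argument of \cite[Theorem 3.6]{BZ15} (a pointwise conversion is impossible there, since $|\Lambda(A)-\Lambda_0|\leq|A-\l\1|$ goes the wrong way); the $|G|$ term is then obtained from $\scf_2$ by perturbing $w$ to $w+\eps'$. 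You instead keep only $\scf_1\geq 0$ and harvest \emph{both} terms from $\scf_2$, exploiting the fact that $M_2$, $M_3$ and $c^*$ depend only on $\l$ (through $G$) and not on $\kappa$ or $h$, so the strict second clause survives replacing $\kappa/2$ by $\kappa/2-\eta$ and inflating the $G$-coefficient by $1+\eta'$; your pointwise bound $\scf_2(A)\geq \eta|A-\l\1|^q+\eta'\l h'(\l^3)|G(A)|$ then follows as you compute. This is correct and in some respects simpler: it is entirely pointwise, needs no rigidity import from \cite{BZ15}, uses strictness only in the second clause, and produces a constant not depending on $\om$. What the paper's route buys is a nondegenerate coefficient $\kappa\eps'/2$ on the $|G|$ term, whereas your coefficient $\eta'\l h'(\l^3)$ vanishes if $h'(\l^3)=0$, a case not excluded by the strict form of \eqref{estsuper}. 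To close that edge case you should either read the strictness as also requiring $h'(\l^3)>0$, or add the easy observation that $|G(A)|\leq C(\l)|A-\l\1|^q$ pointwise (finiteness of $M_2(\l,0)$ and of $M_3$, together with $2<q<3$), so that when $h'(\l^3)=0$ the $|G|$ term is absorbed by the gradient term.
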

\begin{proof} Acccording to \eqref{estf1f2}, $I(u)-I(\ul)$ is bounded below by the sum of $\int_{\om} \scf_1(\nabla u) \, dx$ and $\int_{\om} \scf_2(\nabla u)\,dx$.  By inequality \eqref{estsuper} and Lemma \ref{ff1}, the first of these integrals is nonnegative, while Lemmas \ref{suffest2} and \ref{alassio} together imply that the second integral is nonnegative.   Either way, it follows that $I(u) \geq I(\ul)$, as claimed.  It is then clear that $\lambda_0$, as defined above, is not larger than $\l_{\textrm{crit}}$.

Now suppose that \eqref{estsuper} holds with strict inequality.   The proof of \cite[Theorem 3.6]{BZ15} shows that the inequality $\l^{3-q}h'(\l^3) < \frac{\kappa}{2} (q-2)^{\frac{2-q}{2}} q^{\frac{q}{2}}$ implies, for some $\eps > 0$, that  
\begin{align*} \scf_1(A) & \geq \eps |\Lambda(A) - (\l,\l,\l)|^q \end{align*}
for all $A$ in $\R^{3 \times 3}$.   Here, $\Lambda(A)=(\l_1(A),\l_2(A),\l_3(A))$ is the vector of singular values of the matrix $A$.   The rigidity argument, with minor modifications, given in \cite[Theorem 3.6]{BZ15} then shows that there is a constant, $\beta(\om)$, say, such that
\begin{align*} \int_{\om}  |\Lambda(\nabla u) - (\l,\l,\l)|^q \,dx & \geq \beta(\om) \int_{\om} |\nabla u - \l \1|^q \,dx. \end{align*}   Hence
\begin{align*} 
\int_{\om} \scf_1(\nabla u) \,dx & \geq \eps \beta(\om) \int_{\om} |\nabla u - \l \1|^q \,dx.
\end{align*}
Finally, we deal with the term involving $\scf_2(A)$.  Fix $A$ in $\R^{3 \times 3}$ and let $c_0 = |A-\l \1|$.  According to the proof of Lemma \ref{suffest2}, 
\begin{align}\label{telemannfantasia}\scf_2(A) \geq \frac{\kappa}{2}\left(|A-\l \1|^{q-j} -\frac{2\l h'(\l^3)}{\kappa} \frac{|G(A)|}{|A-\l \1|^j}\right)|A-\l \1|^j
\end{align}
for $j=2$ and $3$.   Reusing the notation $w=2\l h'(\l^3)/\kappa$, and applying the strict version of \eqref{estsuper}, there is $\eps'>0$, which is independent of $c_0$, such that 
\begin{align}\label{tafelmusik} w+\eps' \leq \max\{c_0^{q-2}/M_2(\l,c_0),  c_0^{q-3}/M_3(\l,c_0)\}.
\end{align} 
By rewriting \eqref{telemannfantasia}, we obtain 
\begin{align*}\scf_2(A) \geq \frac{\kappa}{2}\left(c_0^{q-j} -(w+\eps') M_j(\l,c_0)\right)c_0^j + \frac{\kappa\eps'}{2}|G(A)|
\end{align*}
for $j=2$ and $3$.   Thanks to \eqref{tafelmusik}, the term in brackets is nonnegative, which leaves $\scf_2(A) \geq \kappa\eps'|G(A)|/2$.  Both terms in the right-hand side of inequality \eqref{ocular} are now accounted for.
\end{proof}

\begin{remark}\emph{The goal of Theorem \ref{printingpress} is to give the largest possible bound on $\l$ such that $I(u) \geq I(\ul)$.    A careful look at the proof of Lemma \ref{suffest2} shows that one could replace $|G(A)|$ by $G^{-}(A)=-\min\{G(A),0\}$ and that the same conclusions would result, but with 
\begin{align*} M^{-}_2(\l,c_0) & = \sup\left\{\frac{G^{-}(A)}{|A-\l\1|^2}: |A-\l\1|\geq c_0\right\}
\\
 M^{-}_{3}(\lambda,c_0) &  = \sup\left\{ \frac{G^{-}(A)}{|A-\lambda \1|^3}:   0 < |A-\lambda \1| < c_0 \right\}
\end{align*}
in place of $M_2(\l,c_0)$ and $M_3(\l,c_0)$ respectively.  Since $M_{j}^{-}(\l,c_0) \leq M_j(\l,c_0)$ for $j=2$ and $3$, it follows that the upper bound involving $c^*$ in \eqref{estsuper} would not decrease.   Numerical evidence suggests that it does, in fact, increase, and thus provides a better (lower) bound for $\l_{\textrm{crit}}$.  See Remark \ref{silvermaple} and Section \ref{a3} for further details.}
\end{remark}

\subsection{Calculations leading to a concrete upper bound}\label{concrete}

The upper bound \eqref{estsuper} given in the statement of Theorem \ref{printingpress} contains two terms, one of which is explicitly given in terms of the exponent $q$ and one which depends on the fixed point $c^*$ of the function $c_0 \mapsto M_2(\l,c_0)/M_3(\l,c_0)$.   While it does not seem to be possible to find $c^*$ purely analytically, one can nevertheless make progress using a mixture of analysis and a careful numerical calculation, as we now describe.

\begin{proposition}\label{coleridge}  With $M_2(\l,c_0)$ as defined by \eqref{defm2},    $\lim_{c_0 \to \infty} M_2(\l,c_0) = \sqrt{2}$.
\end{proposition}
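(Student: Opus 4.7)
The plan is to show that $G(A)/|A-\lambda\1|^2$ admits a well-defined asymptotic profile parametrised by the unit direction $B := (A-\lambda\1)/|A-\lambda\1|$, and that the supremum of this profile over $|B|=1$ equals exactly $\sqrt{2}$.

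First, I would set $A = \lambda\1 + tB$ with $t = |A-\lambda\1|$ and $|B|=1$. Continuity and positive homogeneity of the singular values give $\lambda_i(A)/t = \lambda_i(\lambda\1/t + B) \to \lambda_i(B)$, while $\cof(A)/t^2 = \cof(\lambda\1/t + B) \to \cof B$, as $t \to \infty$. A short computation then yields $P(A)/t^2 \to \sum_{i<j}\lambda_i(B)\lambda_j(B)$ and $N(A)/t^2 \to \tr \cof B$, so that $G(A)/|A-\lambda\1|^2 \to Q(B) := \sum_{i<j}\lambda_i(B)\lambda_j(B) - \tr \cof B$.

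The substantive step is to evaluate $\sup_{|B|=1} |Q(B)| = \sqrt{2}$. Write $B = U\Sigma V^T$ with $\Sigma = \textrm{diag}(\sigma_1,\sigma_2,\sigma_3)$ and $U, V \in O(3)$. Since $\cof B = \epsilon\, V(\cof\Sigma)U^T$, where $\epsilon := \det U \cdot \det V$, cyclicity of trace and diagonality of $\cof \Sigma$ give $\tr\cof B = \epsilon(m_1 \sigma_2\sigma_3 + m_2 \sigma_1\sigma_3 + m_3 \sigma_1\sigma_2)$ with $m_j := (U^T V)_{jj}$. Setting $a_j := 1-\epsilon m_j$, one obtains
\begin{align*}
Q(B) = a_1\sigma_2\sigma_3 + a_2\sigma_1\sigma_3 + a_3\sigma_1\sigma_2,
\end{align*}
with $a_j \in [0,2]$ and $\sum_j a_j = 3-\epsilon\,\tr(U^T V) \in [0,4]$ (using that $\epsilon\,\tr M \in [-1,3]$ for any $M \in O(3)$, which follows from the eigenvalue structure of orthogonal matrices). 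In particular $Q(B)\ge 0$. Maximising the linear functional $\sum a_j c_j$ with $c_j := \prod_{k\ne j}\sigma_k$ over this polytope places weight $2$ on the two largest $c_j$'s, so $a$ becomes a permutation of $(2,2,0)$; this is achievable, for instance, by $U^T V = \textrm{diag}(-1,-1,1)$. Thus, after ordering $\sigma_{(1)} \le \sigma_{(2)} \le \sigma_{(3)}$, $Q(B) \le 2\sigma_{(3)}(\sigma_{(1)}+\sigma_{(2)})$; bounding $\sigma_{(1)}+\sigma_{(2)} \le \sqrt{2(1-\sigma_{(3)}^2)}$ by Cauchy--Schwarz and then doing one-variable calculus on $\sigma_{(3)}$ pinpoints the optimum at $\sigma = (1/2, 1/2, 1/\sqrt 2)$ with value $\sqrt 2$.

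Finally, assemble. For the lower bound, take the extremal $B_0 := \textrm{diag}(-1/2,-1/2,1/\sqrt 2)$, which has $|B_0|=1$ and realises $Q(B_0)=\sqrt 2$; then $A_n := \lambda\1 + n B_0$ satisfies $|A_n-\lambda\1|=n$ and $G(A_n)/n^2 \to \sqrt 2$, so $M_2(\lambda,c_0) \ge \sqrt 2$ for every $c_0$. For the upper bound, argue by contradiction: if $M_2(\lambda,c_n) \ge \sqrt 2 + \eta$ for some $c_n \to \infty$, extract $A_n$ with $t_n := |A_n-\lambda\1|\ge c_n$ and $|G(A_n)|/t_n^2 > \sqrt 2 + \eta/2$; by compactness of the unit sphere, a subsequence of $B_n := (A_n-\lambda\1)/t_n$ converges to some $B^\ast$ with $|B^\ast|=1$, and the continuity arguments of Step~1 applied to this convergent sequence give $G(A_n)/t_n^2 \to Q(B^\ast) \le \sqrt 2$, a contradiction. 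The main obstacle throughout is the optimization in Step~2: in particular, justifying the polytope description $\{a_j \in [0,2],\ \sum_j a_j \le 4\}$ and verifying that the vertex $(2,2,0)$ is genuinely attainable by a legitimate choice of $U, V \in O(3)$.
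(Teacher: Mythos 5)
Your argument is correct and is essentially the paper's: both pass to the $2$-homogeneous asymptotic profile of $G$ (your $Q(B)$ is the paper's $K(\hat{A})$), reduce by an orthogonal decomposition to the bound $2\sigma_{(3)}(\sigma_{(1)}+\sigma_{(2)})$ subject to $\sum_i\sigma_i^2=1$, and realise the value $\sqrt{2}$ with the same extremal matrix $\mathrm{diag}(-1/2,-1/2,1/\sqrt{2})$, scaled so as to lie beyond any prescribed radius $c_0$. The implementation differences are minor and sound: you handle the limit $c_0\to\infty$ by compactness of unit directions where the paper uses the explicit sandwich \eqref{callas} with $O(1/c_0)$ errors, and your linear-programming relaxation over the diagonal of $U^TV$ is legitimate precisely because the polytope $\{a_j\in[0,2],\ \sum_j a_j\le 4\}$ is only an outer bound used for the upper estimate, with attainability of $(2,2,0)$ verified separately at $U^TV=\mathrm{diag}(-1,-1,1)\in SO(3)$ (this disposes of the ``main obstacle'' you flag); note only the harmless slip that $\cof(U\Sigma V^T)=\epsilon\,U(\cof\Sigma)V^T$ rather than $\epsilon\,V(\cof\Sigma)U^T$, which leaves the diagonal contraction, and hence your formula for $\tr\cof B$, unchanged.
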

\begin{proof}  First, note that the limit in the statement exists because $M_2(c)$ is nonincreasing and bounded below.   Now 
\begin{align*}\frac{G(A)}{|A-\l\1|^2} & = K(\hat{A}) +\frac{\l(2 \tr A - 3 \l)}{|A-\l\1|^2}K(\hat{A})-\frac{\l}{|A - \l\1|^2} \left(\sum_{i=1}^{3} \l_i(A) - \tr A \right)
\end{align*}
where $\hat{A} = A/|A|$ and 
\begin{align*} K(\hat{A}) = \sum_{1 \leq i < j \leq 3} \l_i(\hat{A})\l_j(\hat{A}) - \tr \cof \hat{A}.
\end{align*}
Let $c_0 > 0$.  Since $K$ is bounded and the term $\sum_{i=1}^{3} \l_i(A) - \tr A$ has at most linear growth in $A$, it is clear that there are constants $\alpha_1$ and $\alpha_2$, depending only on $\l$, such that
\begin{align}\label{callas}
|K(\hat{A})| + \frac{\alpha_1}{c_0} \leq \frac{|G(A)|}{|A-\l\1|^2} & \leq  |K(\hat{A})| + \frac{\alpha_2}{c_0}
\end{align}
whenever $|A-\l \1| \geq c_0$.  We now prove that (a) $\max\{K(\hat{A})\}=\sqrt{2}$, where the maximum is taken over all unit matrices $\hat{A}$, and (b) that given any $c>0$ there is $A(c)$ such that $|A(c)-\l \1| \geq c$ and $K(\widehat{A(c)})=\sqrt{2}$.  The proposition then follows from this and \eqref{callas}.

To simplify the notation we replace $\hat{A}$ by $A$ in the following.   By the polar factorization theorem (see \cite[Theorem 3.2-2]{Ci04}), there is a positive semidefinite and symmetric matrix $U$ and a matrix $R \in O(3)$ such that $A = R U$.  The eigenvalues of $U$ are the singular values $\l_i(A)$, $i=1,2,3$, and one has
\begin{align*}
K(A) & = \sum_{1 \leq i,j \leq 3} \l_i(A) \l_j(A) -\tr (\cof R \, \cof U) \\
& = \sum_{1 \leq i,j \leq 3} \l_i(A) \l_j(A) -\tr( (\det R) \, R \, Q^T \, \cof D \, Q),
\end{align*}
where the well-known decomposition $U=Q^T D Q$ has been used, with $D = \textrm{diag} (\l_1(A),\l_2(A),\l_3(A))$ and $Q$ in $O(3)$. Note that $\det R = \pm 1$.     It follows that 
\begin{align*} K(A) & = \sum_{1 \leq i,j \leq 3} \l_i(A) \l_j(A) -\tr(P \, \textrm{diag} (\l_2\l_3,\l_1\l_3, \l_1 \l_2)), \end{align*}
where $P:=Q \det R \, R \, Q^T$ belongs to $SO(3)$.    The term involving the trace satisfies 
\begin{align*}
\tr(P \, \textrm{diag} (\l_2\l_3,\l_1\l_3, \l_1 \l_2) = P_{11}\l_2\l_3 +P_{22} \l_1\l_3 + P_{33} \l_1 \l_2.
\end{align*}
The term $\sum_{1 \leq i,j \leq 3} \l_i(A) \l_j(A)$ is independent of $P$, so we can vary $P$ in order to minimize the term involving the trace given above.    Since $P$ belongs to $SO(3)$, its rows and columns are orthogonal unit vectors.  In particular, $|P_{11}| \leq 1$, so to make $P_{11} \l_2 \l_3$ minimal we should take $P_{11} = -1$. (Here we have used the ordering $\l_3 \geq \l_2 \geq \l_1$.)   Hence row $1$ of $P$ is $(-1,0,0)$, forcing the second and third rows to take the form 
$(0,P_{22},P_{33})$ and $(0,P_{32},P_{33})$ respectively.   In particular, $\det P = -(P_{22}P_{33}-P_{23}P_{32}) = 1$.  Without loss of generality, we may take $P_{22}=\cos \alpha$, $P_{32}=P_{23}=\sin \alpha$, $P_{33}=-\cos \alpha$ for some $\alpha$, in which case
\begin{align*}
P_{11}\l_2\l_3 +P_{22} \l_1\l_3 + P_{33} \l_1 \l_2 & \geq -\l_2 \l_3 + \cos \alpha (\l_1 \l_3 - \l_1\l_2) \\
& \geq -\l_2 \l_3 - \l_1 (\l_3 - \l_2)
\end{align*} 
by choosing $\alpha = \pi$.  Hence
\begin{align*}  K(A) \leq 2 \l_3 (\l_1 + \l_2)
\end{align*}
where the matrix $P$ yielding this upper bound is given by $P=\textrm{diag}(-1,-1,1)$.   Bearing in mind that $|A|^2 = \sum_{i=1}^{3} \l_i^2 = 1$, it can be shown that $2 \l_3 (\l_1 + \l_2) \leq \sqrt{2}$, and that a maximizing choice of singular values is $\l_1=\l_2=1/2$, $\l_3 = 1/\sqrt{2}$.  A suitable choice for a maximizing $A$ would therefore be $A_0 = P \,\textrm{diag} (1/2,1/2,1/\sqrt{2}) = \textrm{diag}\, (-1/2,-1/2,1/\sqrt{2})$, and one can check that $K(A_0)=\sqrt{2}$.  To conclude the proof, it is enough to choose $A(c):=r A_0$ for any $r=r(c)$ large enough that $|A(c) -\l\1| \geq c$.
\end{proof}

Since $D^2G(\l \1)=0$, it can easily be shown that $M_2(\l,c_0)$ is constant as a function of $c_0$ for all sufficiently small and positive $c_0$.   We are therefore justified in writing $M_2(\l,0):=\lim_{c_0 \to 0}M_2(c_0)$, and in fact 
\begin{align*}M_2(\l,0) = \sup\left\{\frac{|G(A)|}{|A-\l\1|^2}: \ A \neq \l \1\right\}.
\end{align*}
According to Proposition \ref{coleridge}, we must have 
\begin{align}\label{frankbridge} M_2(\l,0) & \geq M_2(\l,c_0) \geq \sqrt{2} \ \textrm{for all} \ c_0 > 0.\end{align}  Using a `brute force' approach, which we describe in the appendix, we find that $\sqrt{2} - M_2(\l,0) \approx 2.7\times 10^{-9}$, independently of $\l$ in the range $[1,2]$.   Thus, in view of \eqref{frankbridge}, it seems that $M_2(\l,c_0)\equiv \sqrt{2}$ for all $c_0$ and $\l$, and we record this as:
\begin{conjecture}\label{conjm2}\emph{For all $\l >0$ and $c_0>0$, $M_2(\l,c_0)=\sqrt{2}$.}
\end{conjecture}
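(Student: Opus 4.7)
The goal is to show $M_2(\l, c_0) \leq \sqrt{2}$, which together with \eqref{frankbridge} would give the conjectured equality; and since $M_2$ is nonincreasing in $c_0$, it suffices to establish the pointwise bound
\begin{equation}\label{target}
|G(A)| \leq \sqrt{2}\,|A - \l \1|^2 \qquad \forall A \in \R^{3\times 3}, \  A \neq \l \1.
\end{equation}
A first useful observation is a scaling reduction: setting $A = \l A'$ one finds $G(\l A') = \l^2(K(A') - L(A')) = \l^2 G(A', 1)$ (in the notation $G = K - \l L$ from Proposition \ref{coleridge}) and $|\l A' - \l \1|^2 = \l^2 |A' - \1|^2$, so \eqref{target} is equivalent to the $\l = 1$ case and we may henceforth assume $\l = 1$.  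A second observation is that both sides of \eqref{target} are invariant under conjugation $A \mapsto Q A Q^T$, $Q \in O(3)$:  the singular values, $\tr \cof A$ and $\tr A$ are simultaneous $O(3)$-conjugation invariants, and the Frobenius norm is preserved.  Writing $B = A - \1 = B_s + B_a$ with $B_s$ symmetric and $B_a$ antisymmetric and choosing $Q$ to diagonalize $B_s$, one thus reduces \eqref{target} to a six-parameter inequality in $b = (b_1, b_2, b_3)$ (the diagonal entries of $B_s$) and $\omega = (\omega_1, \omega_2, \omega_3)$ (the axis-vector of $B_a$, so that $|B|^2 = |b|^2 + 2|\omega|^2$).

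Next, the plan is to analyse \eqref{target} as the two separate inequalities $\pm G(A) \leq \sqrt{2}|B|^2$ via Lagrange multipliers: any interior critical point $A^* \neq \1$ of $G/|B|^2$ satisfies $\nabla G(A^*) = 2\mu (A^* - \1)$ with $\mu = G(A^*)/|A^* - \1|^2$ the critical value, to be bounded in absolute value by $\sqrt{2}$.  The near- and far-field behaviours are on the favourable side: because $DG(\1) = 0$ \emph{and} $D^2 G(\1) = 0$ by Proposition \ref{proptan} (via \eqref{partialpc} and \eqref{hootsmon}), the ratio $|G(A)|/|B|^2$ vanishes as $A \to \1$; and as $|A| \to \infty$, $G(A)/|A|^2 \to K(\hat A)$ with $\sup_{|\hat A| = 1}|K(\hat A)| = \sqrt{2}$ attained only in the limit, by Proposition \ref{coleridge}.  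Thus \eqref{target} can only fail at an interior extremum in the reduced six-parameter problem; one would organise the search by the multiplicity pattern of the singular values of $A^*$ (three simple, one double, or triple), solving the polynomial Lagrange system in each open stratum where the singular values are smooth, and handling the degenerate strata via a separate perturbation-and-limit argument.

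The main obstacle I anticipate is that the extremizer $\hat A_0 = \textrm{diag}(-1/2, -1/2, 1/\sqrt{2})$ identified in Proposition \ref{coleridge} has a repeated singular value, which is precisely where the singular-value derivatives $D_U \l_i$ fail to be smooth, so the Lagrange formalism breaks down at the very matrices where the bound is (nearly) saturated.  A further difficulty is that the naive triangle-inequality bound $|G(A)| \leq K(A) + \l |L(A)|$ combined with $K(A) \leq \sqrt{2}|A|^2$ is insufficient, because the deficit $\l L(A)$ is not automatically absorbed by the difference $|A - \l\1|^2 - |A|^2 = 3\l^2 - 2\l \tr A$; any workable direct estimate must pair the contributions of $K$ and $\l L$ together.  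A complete proof would thus likely require either a computer-algebra attack on the Lagrange system after an explicit parametrisation of $SO(3)$ (axis-angle coordinates compatible with the chosen diagonalisation of $B_s$), or a clever a priori pairing argument that refines the bound $|K(A)| \leq \sqrt{2}|A|^2$ by a correction term of the right sign to cancel $\l L(A)$ without loss.  Either route is technically demanding, which is consistent with the authors' decision to record the statement as a numerically-supported conjecture rather than a theorem.
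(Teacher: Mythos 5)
You have not proved the statement, and it is worth being explicit that the paper does not prove it either: Conjecture \ref{conjm2} is recorded precisely because the only rigorous ingredient available is the lower bound $M_2(\l,c_0)\geq \sqrt{2}$, which follows from Proposition \ref{coleridge} and the monotonicity of $c_0\mapsto M_2(\l,c_0)$ (this is \eqref{frankbridge}); the reverse inequality is supported in the paper only by the numerical maximisation of $m_2(\cdot,\l)$ described in Subsection \ref{concrete} and the appendix, which finds $\sqrt{2}-M_2(\l,0)\approx 2.7\times 10^{-9}$. Your framing of what remains to be shown is correct and is essentially the same reduction the paper implicitly relies on: it suffices to prove the pointwise bound $|G(A)|\leq \sqrt{2}\,|A-\l\1|^2$ for all $A\neq\l\1$, and your two preliminary reductions are sound --- the scaling $A=\l A'$ does give $G(\l A')=\l^2 G(A')|_{\l=1}$ and $|\l A'-\l\1|^2=\l^2|A'-\1|^2$, so one may take $\l=1$, and both sides are indeed invariant under $A\mapsto QAQ^T$, $Q\in O(3)$, since $\cof(QAQ^T)=Q\,\cof A\,Q^T$. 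Likewise the near-field control via $DG(\l\1)=0$, $D^2G(\l\1)=0$ and the far-field control via Proposition \ref{coleridge} are correctly invoked.

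The genuine gap is that these boundary behaviours (ratio $\to 0$ near $\l\1$, ratio $\to K(\hat A)\leq\sqrt2$ at infinity) do not by themselves exclude an interior maximum of $|G(A)|/|A-\l\1|^2$ exceeding $\sqrt{2}$ at some finite $A$; that exclusion is the entire content of the conjecture, and your Lagrange-multiplier programme for it is only sketched, never carried out. Moreover, as you yourself note, the programme faces a structural obstruction: the near-extremal matrices have a repeated singular value (the direction $\hat A_0=\mathrm{diag}(-1/2,-1/2,1/\sqrt2)$ of Proposition \ref{coleridge}), exactly where $A\mapsto\l_i(A)$ fails to be differentiable, so the stationarity system $\nabla G(A^*)=2\mu(A^*-\l\1)$ is not even well defined on the relevant stratum and would have to be replaced by a stratified or nonsmooth analysis that you do not supply. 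So what you have written is an honest and correctly reasoned plan, consistent with the authors' own decision to leave the statement as a numerically supported conjecture, but it does not close the one inequality that the conjecture asserts; the only parts of your argument that are actually established are those already proved in the paper.
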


In order to explain the method used to approximate $M_2(\l,0)$, we introduce the notation 
\begin{align*}m_l(A, \lambda) := \frac{|G(A)|}{|A - \lambda{\mathbf{1}}|^l}\end{align*}
for $l=2$ and $3$, and we restrict attention to $\l$ in the range $[1,2]$.   Choose a random starting matrix $A_1$,  compute $g_1:=\nabla m_2(A_1,\l)$ and find the scalar $\sigma_1$, say, which maximises $m_2(A_1+\sigma g_1)$.  Let the maximising value of $\sigma$ be $\sigma_1$;  then we set $A_2 = A_1 + \sigma_1 g_1$. Proceeding iteratively, we
compute a sequence of matrices $A_i$. It turns out that $m_2(A_i, \lambda)$
tends to $\sqrt{2}$ as $i$ increases, regardless of $\lambda$.

Supposing that Conjecture \ref{conjm2} is right, we are now required to find $M_2(\l,c^*)^{q-3} M_3 (\l,c^*)^{q-2}=2^{\frac{q-3}{2}}M_3(\l,c^*)$, where $c^*$ satisfies
$c^* = \sqrt{2}/M_3(\l,c^*)$.   We begin by recalling that $c_0 \mapsto M_3(\l,c_0)$ is nondecreasing, and that, thanks to the at most quadratic growth of the function $G(A)$, there is $c_1(\l)$ such that $M_3(\l,c_0)=M_{3}(\l,c_1)$ for all $c_0 \geq c_1$.   We are therefore justified in defining $M_3(\l,\infty):=M_3(\l,c_1)$.   
Appealing again to numerical techniques, which we describe in the appendix, we find that, for $\l$ in the range $[1,2]$, there is very good agreement between $c_1(\l)$ and the expression $\nu_3\l + \nu_2$, where $\nu_2 \approx 1.764\times 10^{-3}$ and 
and $\nu_3 \approx 1.842$. Moreover, for the same range of $\l$, there is strong numerical evidence for the approximation $M_3(\l,\infty) \approx \nu_1/\l$, where $\nu_1 \approx 0.4501$. See Fig 1. 

\begin{figure}[h]
\begin{center} 
\includegraphics*[width=3.8in]{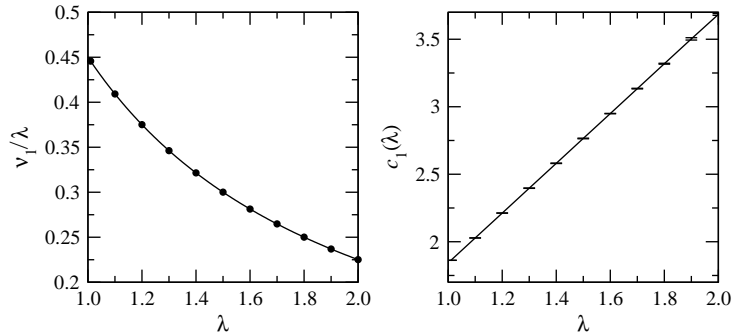}
\caption{Left: plot of $\nu_1/\l=0.4501/\l$ (continuous curve) fitted to a series of values of $M_3(\l,\infty)$ (filled circles). Right: plot of $c_1(\lambda)$ versus $\lambda$. The error bars, which are almost too narrow to see, show the
maximum and minimum values of $c_1$ from among the fifteen top-ranking
matrices. (See the appendix for details.)}
\end{center}
\label{czero}
\end{figure} 

This leads naturally to the following:

\begin{conjecture}\label{conjm3} $M_3(\l,c_0) = M_3(\l,c_1)=\nu_1/\l$ for all $c_0\geq c_1(\l)$ and for $\l$ in the range $[1,2]$, where $c_1(\l)=\nu_3\l + \nu_2$ and the values of $\nu_1,\nu_2$ and $\nu_3$ are given above. \end{conjecture}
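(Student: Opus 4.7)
The first step is to dispatch the $\l$-dependence in the conjecture. Since $\l_i(\l B)=\l\,\l_i(B)$ and $\cof(\l B)=\l^{2}\cof B$, writing $P^{(\l)}$ and $N^{(\l)}$ for the functions in \eqref{f1} and \eqref{defN} with the parameter made explicit, one checks that $P^{(\l)}(\l B)=\l^{2} P^{(1)}(B)$ and $N^{(\l)}(\l B)=\l^{2} N^{(1)}(B)$, hence $G^{(\l)}(\l B)=\l^{2} G^{(1)}(B)$. Combined with $|\l B-\l\1|=\l|B-\1|$, the substitution $A=\l B$ gives
\[
\frac{|G^{(\l)}(A)|}{|A-\l\1|^{3}}=\frac{1}{\l}\,\frac{|G^{(1)}(B)|}{|B-\1|^{3}},
\]
and tracking the constraint $|A-\l\1|<c_0$ as $|B-\1|<c_0/\l$ yields the identity $M_3(\l,c_0)=\frac{1}{\l}M_3(1,c_0/\l)$. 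With $\tilde{c}_1:=c_1(1)$ and $\nu_1:=M_3(1,\infty)$, this immediately produces $M_3(\l,\infty)=\nu_1/\l$ and $c_1(\l)=\tilde{c}_1\,\l$. The conjecture is therefore equivalent to the two scalar assertions $\nu_1=M_3(1,\infty)$ and $\nu_3=\tilde{c}_1$, with the \emph{exact} value $\nu_2=0$; the numerically observed $\nu_2\approx 1.76\times 10^{-3}$ should be interpreted as a rounding artefact of the fitting procedure described in the appendix.

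\textbf{Step 2: reduction to a finite-dimensional problem at $\l=1$.} At $\l=1$ the denominator grows cubically while $|G^{(1)}|$ grows at most quadratically, so $|G^{(1)}(B)|/|B-\1|^{3}\to 0$ as $|B|\to\infty$. Combined with the second-order vanishing of $G^{(1)}$ at $\1$ guaranteed by \eqref{partialpc} and \eqref{hootsmon}, this forces the sup to be attained at some $B^{*}\neq\1$ with $|B^{*}-\1|$ finite; stabilization of $c_0\mapsto M_3(1,c_0)$ for $c_0\geq|B^{*}-\1|$ is then automatic, with $\tilde{c}_1\leq|B^{*}-\1|$. To locate $B^{*}$, I would reuse the polar factorization $B=RU$ with $R\in O(3)$ and $U=Q^{T}\textrm{diag}(\l_1,\l_2,\l_3)Q$ employed in the proof of Proposition \ref{coleridge}. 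Setting $M:=QRQ^{T}\in O(3)$ and $\eps:=\det R$, one obtains
\[
\tr B=\sum_{i}M_{ii}\l_i,\qquad \tr\cof B=\eps\sum_{i}M_{ii}\mu_i,\qquad |B-\1|^{2}=\sum_i\l_i^{2}-2\sum_{i}M_{ii}\l_i+3,
\]
with $\mu_1:=\l_2\l_3$, $\mu_2:=\l_1\l_3$, $\mu_3:=\l_1\l_2$. The nine-dimensional problem therefore collapses to optimizing a rational function of six real parameters: $(\l_1,\l_2,\l_3)\in\R_{+}^{3}$ and $M\in O(3)$ with $\eps=\pm 1$.

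\textbf{Step 3: solving the reduced problem and the main obstacle.} Splitting by the sign of $G^{(1)}$ and of $\eps$, one would first optimize in $M$ via Lagrange multipliers for the constraint $M^{T}M=\1$. By analogy with the argument for $M_2$ in Proposition \ref{coleridge}, I expect the extremal $M$ to be of a very restricted form (diagonal with $\pm 1$ entries, and at most one sign inversion), reducing the problem to a three-variable system in the $\l_i$ whose Euler-Lagrange equations should yield the values $\nu_1$ and $\tilde{c}_1$ in closed form and reproduce the observed $0.4501$ and $1.842$. The main obstacle, which is absent in Proposition \ref{coleridge}, is the coupling between $M$ and the $\l_i$ created by the cross-term $-2\sum_{i}M_{ii}\l_i$ in $|B-\1|^{2}$: unlike the $M_2$-case, where the denominator depended only on $\sum_i\l_i^{2}$ and the inner orthogonal maximisation decoupled cleanly, here that decoupling fails, so the critical set is no longer a single $SO(3)$-orbit and a genuine case analysis is required. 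Should a fully closed form for the maximiser prove elusive, a workable fallback is to derive analytic two-sided bounds on $\nu_1$ tight enough to pinch the numerical value to within the precision reported in the appendix, and similarly for $\tilde{c}_1$.
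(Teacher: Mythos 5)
The statement you are targeting is a \emph{conjecture}: the paper offers no proof of it, only numerical support (Algorithms A and B in the appendix) for the fits $M_3(\l,\infty)\approx\nu_1/\l$ and $c_1(\l)\approx\nu_3\l+\nu_2$. Your Step 1 is correct and in fact does more than the paper: since $P^{(\l)}(\l B)=\l^2P^{(1)}(B)$ and $N^{(\l)}(\l B)=\l^2N^{(1)}(B)$, hence $G^{(\l)}(\l B)=\l^2G^{(1)}(B)$, while $|\l B-\l\1|=\l|B-\1|$, the identity $M_3(\l,c_0)=\l^{-1}M_3(1,c_0/\l)$ is exact. This upgrades the $1/\l$ law from a numerical fit to a theorem, forces $c_1(\l)=c_1(1)\,\l$ (so $\nu_2=0$ exactly, the reported $1.764\times10^{-3}$ being a fitting artefact), and as a by-product explains the paper's observation that the estimate of $M_2(\l,0)$ is independent of $\l$, since the same computation gives $M_2(\l,c_0)=M_2(1,c_0/\l)$. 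That reduction would be a genuinely useful addition to the paper.

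As a proof of the conjecture, however, the proposal has a real gap: the two numbers that carry the quantitative content, $\nu_1=M_3(1,\infty)$ and $\nu_3=c_1(1)$, are never determined. Step 3 is a program rather than an argument: the optimization over $(\l_1,\l_2,\l_3)$ and the orthogonal factor is not carried out, the conclusions rest on ``I expect'' and ``should yield'', and you yourself identify the obstruction — the cross term $-2\sum_i M_{ii}\l_i$ in $|B-\1|^2$ couples the orthogonal variable to the singular values, so the clean decoupling that made the $SO(3)$ step of Proposition \ref{coleridge} tractable is unavailable; the fallback of analytic two-sided bounds is likewise not executed. There is also a smaller lacuna in Step 2: quadratic growth of $G$ plus finiteness of $M_3(1,c_0)$ for small $c_0$ shows the ratio is bounded and vanishes at infinity, but does not by itself exclude that the supremum is only approached as $B\to\1$ (in which case $M_3(1,\cdot)$ would be constant and $c_1=0$), so attainment at a finite $B^*\neq\1$ needs a separate argument. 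In sum, you have rigorously pinned down the exact $\l$-dependence asserted by the conjecture, but its core — the values $0.4501$ and $1.842$, equivalently the solution of the reduced problem at $\l=1$ — is left exactly where the paper leaves it, resting on numerics.
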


Let us now suppose that Conjectures \ref{conjm2} and \ref{conjm3} are correct.   Note that then the function $p: c \mapsto M_2(\l,c)/M_3(\l,c)=\sqrt{2}\l/\nu_1$ is independent of $c$ for all $c \geq c_1$.   There are thus two possibilties for the fixed point $c^*$ of $p$:  either $c^*=\sqrt{2}\l/\nu_1$ or $c^* < c_1$.   Suppose for a contradiction that $c^*< c_1$.   Then $M_3(\l,c^*)\leq M_3(\l,c_1)$, and so $p(c^*) \geq p(c_1)$.  But $p(c^*)=c^*$ and $p(c_1)=\sqrt{2}\l/\nu_1$, which gives $c^* \geq\sqrt{2}\l/\nu_1$.  By hypothesis, $c_1>c^*$, which when combined with the preceding inequality implies $c_1 > \sqrt{2}\l/\nu_1$.  Applying Conjecture \ref{conjm3} and rearranging, we see that this is equivalent to $\nu_1\nu_2 \geq (\sqrt{2}-\nu_3\nu_1)\l$, which can only hold if $\l \leq 1.357 \times 10^{-3}$.   But we supposed that $\l \geq 1$, which is a contradiction.    In summary, we have shown the following result.

\begin{proposition}\label{identifyc*} Let $\l$ belong to the interval $[1,2]$ and suppose that Conjectures \ref{conjm2} and \ref{conjm3} are correct.  Then the unique fixed point of the function $p(c):=M_2(\l,c)/M_3(\l,c)$ is given by $c^* =\sqrt{2}\l/\nu_1$, and 
\begin{align*} M_2(\l,c^*)^{q-3} M_3(\l,c^*)^{2-q}=\frac{1}{\sqrt{2}}\left(\frac{\sqrt{2}\l}{\nu_1}\right)^{q-2}.
\end{align*}
\end{proposition}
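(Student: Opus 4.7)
The plan is to carry out a short case analysis on where the fixed point $c^*$ of $p(c):=M_2(\lambda,c)/M_3(\lambda,c)$ can sit relative to the cutoff $c_1(\lambda)=\nu_3\lambda+\nu_2$, using only the two conjectures and the monotonicity properties of $M_2$ and $M_3$ already established in Lemma \ref{alassio} and Proposition \ref{coleridge}.

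First I would reduce $p$ to an essentially explicit form. Under Conjecture \ref{conjm2}, $M_2(\lambda,c)\equiv\sqrt{2}$, so $p(c)=\sqrt{2}/M_3(\lambda,c)$. Under Conjecture \ref{conjm3}, $M_3(\lambda,c)=\nu_1/\lambda$ for every $c\geq c_1(\lambda)$, so on this range $p$ takes the constant value $\sqrt{2}\lambda/\nu_1$. Thus the equation $p(c)=c$ on $[c_1(\lambda),\infty)$ has the unique solution $c^*=\sqrt{2}\lambda/\nu_1$, provided one first verifies the consistency condition $\sqrt{2}\lambda/\nu_1\geq c_1(\lambda)$; with the numerical values quoted ($\sqrt{2}/\nu_1\approx 3.142$ versus $\nu_3\approx 1.842$ and $\nu_2\approx 1.764\times10^{-3}$) this reduces to the trivial inequality $\lambda(\sqrt{2}/\nu_1-\nu_3)\geq\nu_2$, which clearly holds for $\lambda\geq 1$.

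Next I would rule out a fixed point below $c_1(\lambda)$ by contradiction, mirroring the argument already sketched just before the statement. If $c^*<c_1(\lambda)$, then since $M_3(\lambda,\cdot)$ is nondecreasing, $M_3(\lambda,c^*)\leq M_3(\lambda,c_1)=\nu_1/\lambda$, so
\begin{align*}
c^*=p(c^*)=\frac{\sqrt{2}}{M_3(\lambda,c^*)}\geq\frac{\sqrt{2}\lambda}{\nu_1}.
\end{align*}
Combined with $c^*<c_1(\lambda)=\nu_3\lambda+\nu_2$, this forces $\sqrt{2}\lambda/\nu_1<\nu_3\lambda+\nu_2$, i.e.\ $\lambda<\nu_1\nu_2/(\sqrt{2}-\nu_3\nu_1)\approx 1.357\times10^{-3}$, contradicting $\lambda\geq 1$. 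Uniqueness of $c^*$ on $[c_1(\lambda),\infty)$ is immediate from the constancy of $p$ there, so combining the two ranges yields the claimed identification $c^*=\sqrt{2}\lambda/\nu_1$.

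Finally, the formula for $M_2(\lambda,c^*)^{q-3}M_3(\lambda,c^*)^{2-q}$ is a one-line substitution: using $M_2(\lambda,c^*)=\sqrt{2}$ and $M_3(\lambda,c^*)=\nu_1/\lambda$,
\begin{align*}
M_2(\lambda,c^*)^{q-3}M_3(\lambda,c^*)^{2-q}=(\sqrt{2})^{q-3}\left(\frac{\lambda}{\nu_1}\right)^{q-2}=\frac{1}{\sqrt{2}}\left(\frac{\sqrt{2}\lambda}{\nu_1}\right)^{q-2}.
\end{align*}
The only real content is the contradiction step above; everything else is a direct appeal to Conjectures \ref{conjm2}--\ref{conjm3} and the monotonicity of $M_3(\lambda,\cdot)$. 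The minor obstacle is simply to verify the inequality $\sqrt{2}\lambda/\nu_1\geq c_1(\lambda)$ cleanly for $\lambda\in[1,2]$ so that the fixed point indeed lies in the constant-$p$ regime, which is an arithmetic check with the quoted numerical constants.
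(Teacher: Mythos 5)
Your proposal is correct and follows essentially the same route as the paper: the dichotomy between $c^*\geq c_1(\l)$ (where $p$ is constant equal to $\sqrt{2}\l/\nu_1$) and $c^*<c_1(\l)$, with the latter excluded by the same monotonicity-based contradiction giving $\l\lesssim 1.357\times 10^{-3}$, followed by direct substitution for the final identity. Your extra check that $\sqrt{2}\l/\nu_1\geq c_1(\l)$ is a harmless (indeed welcome) consistency verification that the paper leaves implicit, since existence and uniqueness of $c^*$ are already supplied by Lemma \ref{alassio}.
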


Referring back to the upper bound given in \eqref{estsuper}, we now have the following:

\begin{corollary}\label{concretebound}  Let the assumptions of Proposition \ref{identifyc*} hold.  Then $I(u) \geq I(\ul)$ provided
\begin{align}\label{reduced} 0 & \leq \lambda^{3-q} h'(\lambda^3)  \leq  (\kappa/2)(\sqrt{2})^{q-3}{\nu_1}^{2-q}.\end{align}
\end{corollary}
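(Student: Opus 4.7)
The strategy is to reduce the hypothesis of Theorem \ref{printingpress} to the explicit bound \eqref{reduced} by using Proposition \ref{identifyc*} to evaluate the second term inside the minimum of \eqref{estsuper}. Under the conjectures adopted in this subsection, a direct substitution gives
\begin{align*}
(\kappa/2)\l^{2-q}\,M_2(\l,c^*)^{q-3} M_3(\l,c^*)^{2-q} &= (\kappa/2)\l^{2-q} \cdot \tfrac{1}{\sqrt{2}}\left(\tfrac{\sqrt{2}\l}{\nu_1}\right)^{q-2} \\
&= (\kappa/2)(\sqrt{2})^{q-3}\nu_1^{2-q},
\end{align*}
where the factor $\l^{2-q}$ cancels exactly against the $\l^{q-2}$ extracted from $(\sqrt{2}\l/\nu_1)^{q-2}$. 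This is a pleasant structural feature of the calibration approach: the resulting bound is $\l$-independent and coincides with the right-hand side of \eqref{reduced}.

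It then remains to verify that this second term does not exceed the first term $(\kappa/2)(q-2)^{(2-q)/2}q^{q/2}$ in the minimum appearing in \eqref{estsuper}, so that \eqref{reduced} by itself is enough to imply the full hypothesis of Theorem \ref{printingpress}. Equivalently, one needs
\begin{align*}
(\sqrt{2})^{q-3}\nu_1^{2-q} \leq (q-2)^{(2-q)/2}q^{q/2} \qquad \textrm{for all } q \in (2,3).
\end{align*}
Both sides are continuous functions of $q$ alone, so this is a routine (if slightly tedious) analytic comparison; numerically, with $\nu_1\approx 0.4501$, the left-hand side grows from roughly $0.71$ as $q\to 2^+$ to $2.22$ as $q\to 3^-$, while the right-hand side grows from $2$ to about $5.20$, indicating that the inequality holds with substantial margin. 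Passing to logarithms and checking monotonicity of the difference on $(2,3)$ makes this rigorous.

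Once this comparison is in hand, \eqref{reduced} implies that $\l^{3-q}h'(\l^3)$ is bounded above by each of the two terms inside the minimum in \eqref{estsuper}, so Theorem \ref{printingpress} delivers $I(u) \geq I(\ul)$ at once. The only step requiring any real care is the analytic verification that the calibration bound dominates the classical bound for every $q \in (2,3)$; the computation leading to the explicit constant $(\kappa/2)(\sqrt{2})^{q-3}\nu_1^{2-q}$ is a one-line substitution, and the appeal to Theorem \ref{printingpress} itself is immediate.
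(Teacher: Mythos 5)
Your route is essentially the paper's own: evaluate the second entry of the minimum in \eqref{estsuper} via Proposition \ref{identifyc*}, note that the factor $\lambda^{2-q}$ cancels to leave the $\lambda$-independent constant $(\kappa/2)(\sqrt{2})^{q-3}\nu_1^{2-q}$, and then check that this entry does not exceed the first entry $(\kappa/2)(q-2)^{(2-q)/2}q^{q/2}$, so that \eqref{reduced} alone implies \eqref{estsuper} and Theorem \ref{printingpress} applies. The substitution is correct, and so are the endpoint values you quote.

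The gap is in your justification of the final comparison. Writing $y(q)=(q-2)^{(2-q)/2}q^{q/2}$ and $z(q)=(\sqrt{2})^{q-3}\nu_1^{2-q}$, the quantity $\ln y - \ln z$ is \emph{not} monotone on $(2,3)$, so ``passing to logarithms and checking monotonicity of the difference'' cannot be made to work as stated. Indeed,
\begin{align*}
\frac{d}{dq}\bigl(\ln y(q)-\ln z(q)\bigr) \;=\; \tfrac12\ln\tfrac{q}{q-2}\;-\;\ln\tfrac{\sqrt{2}}{\nu_1},
\end{align*}
which is strictly decreasing and changes sign near $q\approx 2.23$: numerically $\ln y-\ln z$ rises from about $1.04$ at $q=2^{+}$ to about $1.15$ near $q=2.2$ and then falls to about $0.85$ at $q=3$. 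The correct mechanism, and the one the paper uses, is convexity/concavity rather than monotonicity: $z$ is convex in $q$ (it is an exponential in $q$), while $y$ is concave on $(2,3)$; equivalently, the displayed derivative shows that $\ln y-\ln z$ is strictly concave, so its minimum over $[2,3]$ is attained at an endpoint. Since $z(2)=1/\sqrt{2}<2=y(2^{+})$ and $z(3)=1/\nu_1\approx 2.22<3\sqrt{3}=y(3)$ (exactly the checks you already performed), the inequality $z<y$ holds throughout $(2,3)$, and with this repair your argument goes through.
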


\begin{proof}  It is enough to show that
\begin{align*} \min\{(q-2)^{(2-q)/2} q^{q/2},\lambda^{2-q}M_2(\l,c^*)^{q-3} M_3 (\l,c^*)^{2-q}\} & = (\sqrt{2})^{q-3}{\nu_1}^{2-q}.\end{align*}
By Proposition \ref{identifyc*}, we clearly have $\lambda^{2-q}M_2(\l,c^*)^{q-3} M_3 (\l,c^*)^{2-q} =  (\sqrt{2})^{q-3}{\nu_1}^{2-q}$.  Therefore it remains to show that  $(q-2)^{(2-q)/2} q^{q/2} > (\sqrt{2})^{q-3}{\nu_1}^{2-q}$ for $2 < q <3$.  Let $z(q)=(\sqrt{2})^{q-3}{\nu_1}^{2-q}$ and $y(q)=
(q-2)^{(2-q)/2} q^{q/2}$.  Note that $z$ is convex, while a short calculation reveals that $y$ is concave on the interval $(2,3)$.  Therefore the inequality $z(q) < y(q)$ will follow from the pair of inequalities $z(2)<y(2+)$ and $z(3)<y(3)$, both of which are easy to check.  This concludes the proof.  
\end{proof}

\begin{remark}\label{silvermaple}\emph{Corollary \ref{concretebound} suggests that the bound is not the best possible:  we would really expect both terms in the bound given in \eqref{estsuper} to play a role.  One way to achieve this might be to use the quantities $M_j^{-}$ in place of $M_j$ for $j=2,3$.   Indeed, we find, again numerically, that there is very good agreement between $M_{3}^{-}(\l,\infty)$ and $\nu_1'/\l$ for $\l$ in the range $[1,2]$, where $\nu_1' = 0.1923$.  Interestingly, if we then replace $M_3$ by $M_3^{-}$ everywhere in the preceding calculations, we find that $z(3) < y(3)$ no longer holds.   In other words, \emph{both} terms in the upper bound given by \eqref{estsuper} appear to contribute.  This observation comes with some caveats, however; see Section \ref{a3} in the appendix.}
\end{remark}

\section{A condition for the inequality $I(u) \leq I(\ul)$}\label{spectral}

The results in the previous sections provide conditions on $\l$ under which the inequality $I(u) \geq I(\ul)$ holds for admissible maps.  It is natural to ask what information results from supposing that $I(u)\leq I(\ul)$, and, of our results, Theorem \ref{printingpress} is the first place to look.  Now, if \eqref{estsuper} holds with strict inequality, then $\ul$ sits in a `potential well', as expressed by the estimate \eqref{ocular}, which we recall here for the reader's convenience:
\begin{align*} I(u) - I(\ul) \geq C \int_{\om} |\nabla u - \l 1|^q  + |G(\nabla u)|\,dx.\end{align*}
Thus, in these circumstances, $I(u) \leq I(\ul)$ is impossible.  If \eqref{estsuper} holds with equality then a similar remark applies, but with the additional possibility of losing one or both terms in the right-hand side of \eqref{ocular}.  And when \eqref{estsuper} fails, the preceding analysis tells us nothing about those $u$ whose energy satisfies $I(u) \leq I(\ul)$.  Therefore a different approach is called for. 

Consider the following simplified model, in which we set the function $Z$ appearing in \eqref{w} to zero.   Thus we let
\begin{align}\label{ww} W(A) & = |A|^{q} + h(\det A) \ \ \textrm{for} \ A \in \R^{3 \times 3},
\end{align}
where, as before, $2 < q < 3$.    Let $\l > 1 $ and note that, by the convexity of $h$, 
\begin{align}\nonumber I(u) - I(\ul) & \geq \int_{\om} h'(\l^3)(\det \nabla u - \l^3) + |\nabla u|^q - |\nabla \ul|^q \,dx \\
\label{goatwillow}
& \geq \int_{\om} h'(\l^3)(\det \nabla u - \l^3) + \kappa|\nabla u-\nabla \ul|^q\,dx,\end{align}
where we have used \eqref{c:q} and \eqref{ineq0:3d}, and where the constant $\kappa$ obeys the bounds specified in inequality \eqref{j:kappalimits}.
Using identity \eqref{greenalder} and the definition \eqref{f1} of $P$, write 
\begin{align}\label{creepingwillow} \det \nabla u -\l^3 & = \hat{\l}_1\hat{\l}_2\hat{\l}_3 + \l P(\nabla u),\end{align}
where $\hat{\l}_i:=\l_i-\l$ and $\l_i:=\l_i(\nabla u)$.   Finally, recall that the function $G$ defined in \eqref{defG} satisfies
\begin{align*}
\int_{\om} G(\nabla u) \,dx &  = \int_{\om} P(\nabla u) \,dx
\end{align*}
whenever $u$ is admissible.   Combining this with \eqref{goatwillow} and \eqref{creepingwillow}, we have
\begin{align}\label{almondwillow} I(u) - I(\ul) & \geq \int_{\om} h'(\l^3)(\hat{\l}_1\hat{\l}_2\hat{\l}_3 + \l G(\nabla u)) + \kappa|\nabla u-\nabla \ul|^q\,dx.\end{align}
It will be useful to have a shorthand for the function with prefactor $h'(\l^3)$ appearing in \eqref{almondwillow};  accordingly, let
\begin{align}\label{defH} H(A) & = \hat{\l}_1(A)\hat{\l}_2(A)\hat{\l}_3(A) + \l G(A). \end{align}

We now give a series of results which allow us to find a lower bound on the function $H$.  

\begin{lemma} Let $A$ be a $3 \times 3$ matrix such that $\det A > 0$ and whose singular values obey $\l_1(A) \leq \l_2(A) \leq \l_3(A)$.  Then there is $R$ in $SO(3)$ such that 
\begin{align} \label{abcform}G(A) & =(1-R_{11})\alpha + (1-R_{22})\beta + (1-R_{33})\gamma,  \end{align} 
where $\alpha = \l_2 \l_3-\l\l_1$, $\beta = \l_1 \l_3 - \l \l_2$ and $\gamma = \l_1 \l_2 - \l \l_3$, and $\l_j=\l_j(A)$ for $j=1,2,3$.

\end{lemma}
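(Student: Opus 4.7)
The strategy is to use the polar decomposition together with a spectral decomposition of the symmetric positive part, exactly as in the proof of Proposition \ref{coleridge}, and then simply read off the coefficients of $\alpha,\beta,\gamma$ from the resulting formula for $G(A)$.

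The first step is to factor $A=R'U$ with $U=\sqrt{A^TA}$ symmetric and positive semidefinite and $R'$ orthogonal; since $\det A>0$ we have $\det R'=1$, so $R'\in SO(3)$. Next, diagonalise $U=Q^TDQ$ with $Q\in O(3)$ and $D=\mathrm{diag}(\l_1,\l_2,\l_3)$, so that $\cof U=Q^T\mathrm{diag}(\l_2\l_3,\l_1\l_3,\l_1\l_2)Q$. Define the matrix
\[ R := Q R' Q^T, \]
which lies in $SO(3)$ since conjugation by $Q\in O(3)$ preserves determinants.

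The key calculation is then the pair of identities
\[ \tr A = \tr(R'U) = \tr(QR'Q^T D) = R_{11}\l_1 + R_{22}\l_2 + R_{33}\l_3, \]
\[ \tr\cof A = \tr(R' \cof U) = \tr(QR'Q^T\,\mathrm{diag}(\l_2\l_3,\l_1\l_3,\l_1\l_2)) = R_{11}\l_2\l_3 + R_{22}\l_1\l_3 + R_{33}\l_1\l_2, \]
where I have used $\cof A = (\det R')R'\,\cof U = R'\,\cof U$ in the second line, together with the cyclic property of the trace. Substituting these into
\[ G(A) = \sum_{1\leq i<j\leq 3}\l_i\l_j - \l\sum_{i=1}^{3}\l_i - \tr\cof A + \l\tr A, \]
the terms group naturally so that the coefficients of $(1-R_{11}),(1-R_{22}),(1-R_{33})$ are precisely $\l_2\l_3-\l\l_1$, $\l_1\l_3-\l\l_2$ and $\l_1\l_2-\l\l_3$, giving \eqref{abcform}.

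There is no serious obstacle here: the argument is essentially bookkeeping once the polar and spectral decompositions are in place, and the hypothesis $\det A>0$ is used in precisely one spot, namely to guarantee $R'\in SO(3)$ (and hence $R\in SO(3)$) rather than merely $R'\in O(3)$. The only thing worth flagging is that the matrix $R$ depends on $A$ through both the polar factor and the orthogonal diagonaliser $Q$, and in particular $R$ is generally not unique when $U$ has repeated singular values; this causes no difficulty because the statement only claims existence of some such $R$.
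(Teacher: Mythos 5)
Your proof is correct and follows essentially the same route as the paper: polar decomposition $A=R'U$ (with $\det A>0$ forcing $R'\in SO(3)$), spectral decomposition $U=Q^TDQ$, and the conjugated rotation $R=QR'Q^T$, after which $\tr A=\tr(RD)$ and $\tr\cof A=\tr(R\,\cof D)$ give the formula by direct bookkeeping with $G=P-N$. The paper's notation ($A=Q_2Q_1^TDQ_1$, $R=Q_1Q_2Q_1^T$) differs only cosmetically from yours.
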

\begin{proof}  For brevity, write $\l_j$ in place of $\l_j(A)$ for $j=1,2,3$.  Note that $P(A) = \alpha + \beta + \gamma$ by definition, and that, by polar decomposition, there are matrices $Q_1$ and $Q_2$, belonging to $O(3)$, such that $A= Q_2 Q_1^T D Q_1$ and where $D$ is a diagonal matrix with entries $\l_1$, $\l_2$ and $\l_3$.   (See \cite[Theorem 3.2-2]{Ci04}.)  Since $\det A > 0$, $Q_2$ must belong to $SO(3)$.   We see that
\begin{align*} N(A) & = \tr \cof (Q_2 Q_1^T D Q_1) - \l \tr Q_2 Q_1^T D Q_1 \\
& = \tr (R (\cof D - \l D)),\end{align*}
where $R:=Q_1 Q_2 Q_1^T$ belongs to $SO(3)$.   Hence $N(A) = R_{11} \alpha + R_{22} \beta + R_{33} \gamma$, and \eqref{abcform} follows.  
\end{proof}

Our aim is to minimize $G(A)$ by allowing $R$ to vary in $SO(3)$.  To that end, consider the following.  

\begin{lemma}\label{lucombeoak} Let $R$ belong to $SO(3)$ and suppose that it minimizes \begin{align*}g(R)&= (1-R_{11})\alpha + (1-R_{22})\beta + (1-R_{33})\gamma.\end{align*}  Then 
\begin{align}\label{baywillow1} \alpha R_{12} -\beta R_{21} & = 0 \\
\label{baywillow2}\alpha R_{13} -\gamma R_{31} & = 0 \\
\label{baywillow3} \beta R_{23} - \gamma R_{32} & = 0. \end{align}
If none of $\alpha,\beta,\gamma$ is zero then 
\begin{align}\label{crack-willow1} \left(\left(\frac{\alpha}{\beta}\right)^2-1\right)R_{12}^2 + \left(\left(\frac{\alpha}{\gamma}\right)^2-1\right)R_{13}^2 & = 0 \\
\label{crack-willow2} 
\left(\left(\frac{\beta}{\gamma}\right)^2-1\right)R_{23}^2 - \left(\left(\frac{\alpha}{\beta}\right)^2-1\right)R_{12}^2 & = 0. 
\end{align}
Moreover, if exactly one of $\alpha, \beta, \gamma$ is zero, then  either $R_{11}^2=1$ or $R_{33}^2=1$.   The same is true if none of $\alpha, \beta$ and $\gamma$ is zero, provided at least one of $\alpha > \beta$ and $\beta > \gamma$ holds. 
\end{lemma}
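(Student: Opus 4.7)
The plan is to treat this as a constrained minimization on the manifold $SO(3)$ via Lagrange multipliers for the quadratic constraint $R^TR=\1$, and then to combine the resulting first-order equations with the orthonormality of the rows and columns of $R$.

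First, introduce a symmetric multiplier $\Lambda\in\R^{3\times3}$ and form the Lagrangian $\scl(R)=g(R)-\tr(\Lambda(R^TR-\1))$. Writing $D=\textrm{diag}(\alpha,\beta,\gamma)$, a direct differentiation gives $\partial g/\partial R_{ij}=-D_{ij}$, and setting $\partial \scl/\partial R=0$ produces the matrix equation $R\Lambda=-\tfrac12 D$, i.e.\ $\Lambda=-\tfrac12 R^TD$. The requirement that $\Lambda$ be symmetric translates, entry by entry, into $R_{ji}D_{jj}=R_{ij}D_{ii}$ for each pair $i\neq j$, which unpacks into the three equations \eqref{baywillow1}--\eqref{baywillow3}.

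Second, I would exploit $R^TR=\1=RR^T$, which says that the rows and columns of $R$ are both orthonormal. Comparing the squared norm of row $1$ with that of column $1$ gives $R_{12}^2+R_{13}^2=R_{21}^2+R_{31}^2$, and assuming $\beta,\gamma\neq0$, \eqref{baywillow1}--\eqref{baywillow2} let me substitute $R_{21}=(\alpha/\beta)R_{12}$ and $R_{31}=(\alpha/\gamma)R_{13}$ to obtain \eqref{crack-willow1}. The same comparison for row $2$ versus column $2$, together with \eqref{baywillow1} and \eqref{baywillow3} used to substitute for $R_{21}$ and $R_{32}$, then yields \eqref{crack-willow2}.

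The final structural claim requires a case analysis. The easy subcases are $\alpha=0$, where \eqref{baywillow1}--\eqref{baywillow2} immediately force $R_{21}=R_{31}=0$ and hence column $1$ has $R_{11}^2=1$, and $\gamma=0$, where $R_{13}=R_{23}=0$ and hence $R_{33}^2=1$. The subcase $\beta=0$ only gives $R_{12}=R_{32}=0$, so $R_{22}^2=1$; here $R$ splits into a $2\times2$ orthogonal block in the $(1,3)$-indices and one finishes with a one-parameter minimization of $(1-R_{11})\alpha+(1-R_{33})\gamma$ over that block. The main obstacle is the case where none of $\alpha,\beta,\gamma$ vanishes: the ordering $\alpha\ge\beta\ge\gamma$ (coming from $\alpha-\beta=(\lambda_2-\lambda_1)(\lambda_3+\lambda)$ and $\beta-\gamma=(\lambda_3-\lambda_2)(\lambda_1+\lambda)$) prevents $|\beta|$ from being strictly larger than both $|\alpha|$ and $|\gamma|$, so the signs of the coefficients of $R_{12}^2,R_{13}^2,R_{23}^2$ in \eqref{crack-willow1}--\eqref{crack-willow2} are largely controlled. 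Combined with the strict hypothesis $\alpha>\beta$ or $\beta>\gamma$, a sign-chasing argument then forces either $R_{12}=R_{13}=0$ (hence $R_{11}^2=1$) or $R_{13}=R_{23}=0$ (hence $R_{33}^2=1$), with a small number of borderline configurations in which a coefficient vanishes needing a short supplementary appeal to the minimality of $g(R)$ against its value at appropriate diagonal sign-flips in $SO(3)$.
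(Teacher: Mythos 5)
Your first two steps are sound and essentially equivalent to what the paper does: the symmetric Lagrange multiplier computation (stationarity $\Leftrightarrow$ $R^TD$ symmetric, $D=\textrm{diag}(\alpha,\beta,\gamma)$) reproduces \eqref{baywillow1}--\eqref{baywillow3}, which the paper obtains by parametrising the tangent space at $R$ by matrices $\rho$ with $R^T\rho$ antisymmetric; and your derivation of \eqref{crack-willow1}--\eqref{crack-willow2} from the equality of row and column norms combined with \eqref{baywillow1}--\eqref{baywillow3} is correct, and in fact supplies a derivation that the paper's proof leaves implicit. The cases $\alpha=0$ and $\gamma=0$ are handled as in the paper, and your $\beta=0$ case, though only sketched (you must treat both blocks $R_{22}=1$ and $R_{22}=-1$ and compare; the global minimum sits at $\textrm{diag}(1,-1,-1)$), can be completed.

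The genuine gap is the final case, where none of $\alpha,\beta,\gamma$ vanishes: the ``sign-chasing argument'' is asserted rather than carried out, and the ``borderline configurations'' you set aside are exactly where the substance lies. Your observation that $|\beta|$ cannot strictly exceed both $|\alpha|$ and $|\gamma|$ does control the generic sign patterns (e.g.\ if the coefficients of $R_{13}^2$ and $R_{23}^2$ have a common strict sign, adding \eqref{crack-willow1} and \eqref{crack-willow2} gives $R_{13}=R_{23}=0$, hence $R_{33}^2=1$; if those of $R_{12}^2$ and $R_{13}^2$ are both positive, \eqref{crack-willow1} gives $R_{11}^2=1$), but when $\gamma=-\alpha$ a coefficient vanishes and stationarity alone leaves the one-parameter family with $R_{22}=1$ and a rotation block in the $(1,3)$ indices. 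When $\alpha>\beta$ this family has $g=(1-R_{11})(\alpha+\gamma)=0>2(\beta+\gamma)=g(\textrm{diag}(1,-1,-1))$, so your proposed appeal to minimality against diagonal sign-flips does close it --- but that comparison is the actual proof and needs to be written out.

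More seriously, in the borderline configuration $\alpha=\beta=-\gamma>0$ (realisable within the paper's setting, e.g.\ $\lambda_1=\lambda_2=\lambda<\lambda_3$) no supplementary appeal can succeed, because the asserted conclusion is false there: one checks that $g\equiv 0$ along the whole family of rotations about the $e_2$-axis, that $\min_{SO(3)}g=0$, and hence that every such rotation is a global minimiser, yet $R_{11}^2=R_{33}^2=\cos^2\varphi\neq 1$ in general, even though the hypothesis $\beta>\gamma$ holds. So your plan cannot be completed as stated in that configuration; to be fair, the paper's own Case (v) has the same blind spot, since it invokes \eqref{crack-willow1} to conclude $R_{13}=0$, which requires $|\gamma|\neq|\alpha|$. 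What does survive in the degenerate case, and is all that is used downstream in Lemma \ref{cricket-batwillow}, is the bound $g(R)\geq\min\{2(\beta+\gamma),0\}=0$, which can be verified directly; if you want the lemma exactly as stated, you must either exclude $\alpha=\beta=-\gamma$ or weaken the conclusion there.
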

\begin{proof}  We first show that \eqref{baywillow1}-\eqref{baywillow3} hold.  It is well known that the tangent space to $SO(3)$ at $R$ consists of those $\rho$ in $R^{3 \times 3}$ such that $R^T \rho$ is antisymmetric.   From this, it easily follows that there are real numbers $a,b$ and $c$ such that 
\begin{align*} \rho_{11} & = -b R_{13} - a R_{12} \\
\rho_{22} & = a R_{21} - c R_{23}\\
\rho_{33} & = b R_{31}  + c R_{32}.
\end{align*}
Now suppose that $R(\eps)$ is a smooth path of matrices belonging to $SO(3)$, and satisfying $R(0)=R$ and $\dot{R}(0)=\rho$.   We then have
\begin{align*} \partial_{\eps}\arrowvert_{\eps = 0} g(R(\eps))&= a(R_{12} \alpha - R_{21}\beta) + b (R_{13}\alpha - R_{31}\gamma) + c(R_{23}\beta -R_{32} \gamma),\end{align*}
so that, by varying $a,b$ and $c$ independently, the stationarity conditions \eqref{baywillow1}-\eqref{baywillow3} follow.  

To prove the last part of the statement, we consider cases as follows.   

\vspace{1mm}
\noindent Case (i): $\bm{\alpha=0 > \beta \geq \gamma}.$
\newline \noindent Using \eqref{baywillow1} and \eqref{baywillow2}, we see that $R_{21}=R_{31}=0$.  Hence, since the first column of $R$ is a unit vector, we must have $R_{11}^2=1$. 

\vspace{1mm}
\noindent Case (ii):  $\bm{\alpha> 0=\beta > \gamma}.$
\newline \noindent Using \eqref{baywillow1} and \eqref{baywillow3}, we must have $R_{12}=0$ and $R_{31}=0$.  By \eqref{baywillow2}, we then have $R_{13}=0$.  Hence $R_{11}^2=1$, as before.

\vspace{1mm}
\noindent Case (iii):  $\bm{\alpha \geq \beta > \gamma=0.}$  
\newline \noindent Equations \eqref{baywillow2} and \eqref{baywillow3} imply respectively that $R_{13}=0$ and $R_{23}=0$.  Hence $R_{33}^2=1$.

\vspace{1mm}
\noindent Case (iv):  $\bm{\alpha >  \beta > \gamma; \alpha \neq 0, \beta \neq 0, \gamma \neq 0}.$   In this case, \eqref{baywillow1} implies that $R_{12} = R_{21} = 0$, and so $R_{11}^2=1$.

\vspace{1mm}
\noindent Case (v):  $\bm{\alpha = \beta > \gamma; \alpha \neq 0, \beta \neq 0, \gamma \neq 0}.$  Now \eqref{crack-willow1} implies that $R_{13}=0$ and \eqref{crack-willow2} that $R_{23}=0$.  Hence $R_{33}^2=1$.

\vspace{1mm}
\noindent Case (vi):  $\bm{\alpha >  \beta = \gamma; \alpha \neq 0, \beta \neq 0, \gamma \neq 0}.$   Equations \eqref{crack-willow1} and \eqref{crack-willow2} imply that $R_{12}=R_{13}=0$, and hence $R_{11}^2=1$.
\end{proof}

The next result will enable us to deal with the case $\alpha = \beta = \gamma$. 

\begin{lemma}\label{mintrace} Let $R$ belong to $SO(3)$ and suppose that $R$ is symmetric.  Then $3 \geq \tr R \geq -1$.
\end{lemma}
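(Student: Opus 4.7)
The plan is to exploit the strong structural constraint that simultaneous symmetry and orthogonality place on $R$. First I would observe that if $R \in SO(3)$ is symmetric then $R^2 = R R^T = \mathbf{1}$, so $R$ is an involution. Since $R$ is real symmetric, the spectral theorem guarantees that $R$ is orthogonally diagonalizable with real eigenvalues, and the relation $R^2 = \mathbf{1}$ forces each eigenvalue $\mu$ to satisfy $\mu^2 = 1$, hence $\mu \in \{-1,+1\}$.

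Next I would use the determinant constraint. Since $\det R = 1$ equals the product of the eigenvalues, the number of $-1$ eigenvalues must be even. In three dimensions the only possibilities are therefore
\begin{itemize}
\item all three eigenvalues equal to $+1$, in which case $\tr R = 3$, or
\item exactly two eigenvalues equal to $-1$ and one equal to $+1$, giving $\tr R = -1$.
\end{itemize}
The conclusion $-1 \leq \tr R \leq 3$ follows immediately; in fact one obtains the sharper statement $\tr R \in \{-1,3\}$, which is worth recording even if the weaker bound is all that subsequent results require.

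There is no real obstacle here: the argument is a standard spectral-theory calculation, and the only thing to be careful about is making the determinant bookkeeping explicit so that the case $\{-1,-1,-1\}$ (which would give $\det R = -1$ and is excluded) is clearly ruled out. Geometrically, the two admissible cases correspond respectively to $R = \mathbf{1}$ and to $R$ being a rotation by $\pi$ about some axis (so eigenvalue $+1$ on the axis and $-1$ on its orthogonal complement), which is a useful mental picture for the application in the case $\alpha=\beta=\gamma$ that motivates the lemma.
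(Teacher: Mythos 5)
Your proof is correct and follows essentially the same route as the paper: use $R^2=\mathbf{1}$ to force eigenvalues $\pm 1$, then the constraint $\det R=1$ to bound the trace. Your explicit case analysis merely sharpens the paper's conclusion to $\tr R \in \{-1,3\}$, which is a harmless refinement.
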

\begin{proof}If $R$ is a symmetric, orthogonal matrix then $R^2=\1$, from which it follows that any eigenvalue $\mu_i$ of $R$ must satisfy $\mu_i^2=1$.  Moreover, $\mu_1 \mu_2 \mu_3=1$, from which it follows that $3 \geq \tr R = \mu_1+\mu_2+\mu_3 \geq -1$. 
\end{proof}

\begin{lemma}\label{cricket-batwillow}  Let $R$ belong to $SO(3)$.
\begin{align*}g(R)= (1-R_{11})\alpha + (1-R_{22})\beta + (1-R_33)\gamma
\end{align*}
satisfies 
\begin{align}\label{italianalder} g(R) &\geq \min\{2(\beta + \gamma),0\}.\end{align}
\end{lemma}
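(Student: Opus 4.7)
The strategy is to exploit compactness of $SO(3)$ together with Lemma \ref{lucombeoak}. Since $g$ is continuous on the compact set $SO(3)$, a minimiser $R^*$ exists, and the ordering $\lambda_1 \leq \lambda_2 \leq \lambda_3$ gives $\alpha \geq \beta \geq \gamma$, so the hypothesis of Lemma \ref{lucombeoak} covers every configuration except $\alpha = \beta = \gamma$. In the non-degenerate regime the lemma forces $(R^*_{11})^2 = 1$ or $(R^*_{33})^2 = 1$; because the first (or third) row and column are unit vectors, this in turn pins down four further entries of $R^*$ to zero, reducing $R^*$ to a block diagonal matrix with $\pm 1$ in one corner and a $2\times 2$ orthogonal block $S$ in the other, whose determinant is dictated by $\det R^* = 1$.

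I would then enumerate the four subcases and minimise $g$ over the one-parameter family of $S$. Writing $S_{11} = \cos\theta$ throughout, the case $R^*_{11}=1$ with $S \in SO(2)$ yields $g(R^*) = (1-\cos\theta)(\beta+\gamma)$ with infimum $\min\{2(\beta+\gamma),0\}$, and the case $R^*_{11}=-1$ with $S$ forced to be a reflection with diagonal $(\cos\theta,-\cos\theta)$ gives $g(R^*) = 2\alpha + \beta + \gamma - (\beta-\gamma)\cos\theta$, whose infimum $2\alpha+2\gamma$ dominates $2(\beta+\gamma)$ because $\alpha \geq \beta$. Analogously, $R^*_{33}=1$ yields infimum $\min\{2(\alpha+\beta),0\}$ and $R^*_{33}=-1$ yields infimum $2(\beta+\gamma)$; the first dominates $\min\{2(\beta+\gamma),0\}$ since $\alpha \geq \gamma$, and the second matches the bound exactly. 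Taking the minimum over the four branches delivers \eqref{italianalder}.

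The degenerate case $\alpha = \beta = \gamma$ is where Lemma \ref{lucombeoak} is silent, but here $g(R) = \alpha(3 - \tr R)$ depends only on $\tr R$. Every $R \in SO(3)$ has eigenvalues $\{1, e^{\pm i\theta}\}$, hence $\tr R \in [-1, 3]$; equivalently, the minimum of $\tr R$ over $SO(3)$ is attained at the symmetric rotation $2vv^T - \1$ for a unit vector $v$, allowing Lemma \ref{mintrace} to be invoked directly. If $\alpha \geq 0$ we get $g(R) \geq 0$; if $\alpha < 0$ we get $g(R) \geq 4\alpha$. Either way $g(R) \geq \min\{4\alpha,0\} = \min\{2(\beta+\gamma),0\}$. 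The only care required throughout is the sign bookkeeping, since $\alpha,\beta,\gamma$ can have mixed signs and each branch must be matched against $\min\{2(\beta+\gamma),0\}$ individually rather than pairwise; this is the main (but entirely routine) obstacle, as Lemma \ref{lucombeoak} has already collapsed a three-dimensional optimisation to a one-parameter problem on which explicit minimisation is straightforward.
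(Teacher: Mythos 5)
Your proof is correct and is essentially the paper's own argument: first-order stationarity at a minimiser (Lemma \ref{lucombeoak}) forces $R_{11}^2=1$ or $R_{33}^2=1$, the resulting block-diagonal one-parameter families are then minimised explicitly in $\cos\theta$ exactly as in the paper (which writes out only the $R_{11}^2=1$ branch and calls the $R_{33}^2=1$ branch ``similar''), and the equal case $\alpha=\beta=\gamma$ is settled by the bound $\tr R\geq -1$, for which your eigenvalue observation is if anything cleaner than the appeal to Lemma \ref{mintrace}. The one slip is the claim that Lemma \ref{lucombeoak} covers every configuration except $\alpha=\beta=\gamma$: its dichotomy is asserted only when at most one of $\alpha,\beta,\gamma$ vanishes (or none vanish and a strict inequality holds), so the configurations with exactly two of them zero---which do occur here, e.g.\ $\alpha>\beta=\gamma=0$ or $\alpha=\beta=0>\gamma$---are not literally covered by your case split; they are, however, immediate ($g(R)=(1-R_{11})\alpha\geq 0$ since $\alpha\geq\beta=0$, respectively $g(R)=(1-R_{33})\gamma\geq 2\gamma=2(\beta+\gamma)$), which is precisely the one-line disposal with which the paper opens its proof.
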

\begin{proof}  If two or more of $\alpha, \beta$ and $\gamma$ are zero then the lower bound $g(R) \geq 2(\beta + \gamma)$ is trivial.  Thus, to minimize $g$, we may begin by supposing that the conditions of Lemma \ref{lucombeoak} apply, so that, in all cases except $\alpha = \beta = \gamma$, we have either that $R_{11}^2=1$ or $R_{33}^2=1$.     First suppose that $R_{11}^2 = 1$.  Then the diagonal elements of $R$ are either of the form $1,\cos \sigma, \cos \sigma$ for some $\sigma$, or else of the form $-1, \cos \sigma, - \cos \sigma$.  In the former case, 
\begin{align*} g(R)  = (1-\cos \sigma)(\beta + \gamma).\end{align*}
If $\beta+ \gamma  \geq 0$ then clearly $g(R) \geq 0$. If $\beta + \gamma <0$ then to minimize $g$ we take $\cos \sigma = -1$ and the claimed lower bound follows.  If $R_{11}=-1$, then 
\begin{align*} g(R) = 2 \alpha + \beta + \gamma + (\gamma - \beta) \cos \sigma, \end{align*}
which, since $\gamma \leq \beta$, implies that we should take $\cos \sigma = 1$ in order to minimize $g$.  Hence 
\begin{align*} g(R) & \geq 2 (\alpha + \gamma) \geq 2(\beta + \gamma).\end{align*}
If $R_{33}^2=1$ then the argument needed is similar.  Finally, let us suppose that $\alpha = \beta = \gamma$.  Then 
\begin{align*}g(R) = (3 -\tr R) \gamma.\end{align*}
If $\gamma < 0$ then $g$ is minimized when $\tr R = -1$, according to Lemma \ref{mintrace}.  Hence, in this case, $g(R) \geq 4 \gamma = 2(\beta + \gamma)$.  Otherwise, $g(R) \geq 0$ because $\tr R \leq 3$ by Lemma \ref{mintrace}.     This completes the proof.
\end{proof}

\begin{proposition}\label{bartok1}  Let $\l > 0$, let $H$ be given by \eqref{defH} and let $\l_1 \leq \l_2 \leq \l_3$ be the singular values of $A$.   Then 
\begin{align*}H(A) & \geq \left\{ \begin{array}{l l} (\l_1-\l)(\l_2-\l)(\l_3-\l)  & \textrm{if } \l_1 \geq \l   \\    
(\l_1-\l)(\l_2+\l)(\l_3+\l) & \textrm{if } \l_1 \leq \l. \end{array}\right.
\end{align*}
\end{proposition}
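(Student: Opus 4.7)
The plan is to combine the lower bound on $G$ provided by Lemma \ref{cricket-batwillow} with a direct case split on the sign of $\lambda_1 - \lambda$. I would first observe that, for any admissible $A$ (i.e. one with $\det A > 0$), the representation \eqref{abcform} applies, so Lemma \ref{cricket-batwillow} gives the pointwise bound
\begin{align*} G(A) & \geq \min\{2(\beta+\gamma),\,0\},\end{align*}
where $\beta = \lambda_1\lambda_3 - \lambda \lambda_2$ and $\gamma = \lambda_1\lambda_2 - \lambda\lambda_3$. The crucial observation is that the combination $\beta + \gamma$ factorises cleanly:
\begin{align*} \beta + \gamma & = \lambda_1(\lambda_2 + \lambda_3) - \lambda(\lambda_2 + \lambda_3) = (\lambda_1 - \lambda)(\lambda_2+\lambda_3).\end{align*}
Thus the sign of $\beta + \gamma$ is controlled entirely by the sign of $\lambda_1 - \lambda$, which immediately suggests the dichotomy in the statement.

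In the first case, $\lambda_1 \geq \lambda$, the quantity $\beta + \gamma$ is nonnegative, so $\min\{2(\beta+\gamma),0\} = 0$ and hence $\lambda G(A) \geq 0$. Substituting into the definition \eqref{defH} of $H$ gives
\begin{align*} H(A) & = \hat{\lambda}_1 \hat{\lambda}_2 \hat{\lambda}_3 + \lambda G(A) \geq (\lambda_1 - \lambda)(\lambda_2-\lambda)(\lambda_3-\lambda),\end{align*}
which is exactly the first branch of the claim.

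In the second case, $\lambda_1 \leq \lambda$, we have $\beta+\gamma \leq 0$, so Lemma \ref{cricket-batwillow} yields $\lambda G(A) \geq 2\lambda(\lambda_1-\lambda)(\lambda_2+\lambda_3)$. The proof then reduces to the purely algebraic identity
\begin{align*} (\lambda_1-\lambda)(\lambda_2-\lambda)(\lambda_3-\lambda) + 2\lambda(\lambda_1-\lambda)(\lambda_2+\lambda_3) & = (\lambda_1-\lambda)(\lambda_2+\lambda)(\lambda_3+\lambda),\end{align*}
which one verifies by factoring out $(\lambda_1-\lambda)$ and checking that $(\lambda_2-\lambda)(\lambda_3-\lambda) + 2\lambda(\lambda_2+\lambda_3)$ expands to $\lambda_2\lambda_3 + \lambda(\lambda_2+\lambda_3) + \lambda^2 = (\lambda_2+\lambda)(\lambda_3+\lambda)$. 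Combining this with the lower bound for $\lambda G(A)$ proves the second branch.

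There is no real obstacle here; once the reduction to $g(R)$ on $SO(3)$ carried out in Lemma \ref{cricket-batwillow} is in hand, the result is essentially bookkeeping. The only point worth flagging is that the representation \eqref{abcform} of $G(A)$ tacitly uses $\det A > 0$ in order to choose $R \in SO(3)$ rather than $O(3)$; this is automatic in the present context because hypothesis (H3) forces $\det \nabla u > 0$ almost everywhere for any admissible competitor, and the bound for $A$ with $\det A \leq 0$ is in any case immaterial to the application in \eqref{almondwillow}.
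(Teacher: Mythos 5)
Your proof is correct and follows essentially the same route as the paper: invoke the lower bound $G(A)\geq\min\{2(\beta+\gamma),0\}$ from Lemma \ref{cricket-batwillow}, note $\beta+\gamma=(\lambda_1-\lambda)(\lambda_2+\lambda_3)$, and split on the sign of $\lambda_1-\lambda$, with the same algebraic identity in the second case. The remark about $\det A>0$ being needed for the $SO(3)$ representation is a fair observation but does not change the argument.
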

\begin{proof} First suppose that $\l_1 \geq \l$.   Note that $\beta+\gamma= (\l_1 - \l)(\l_2+\l_3)$ is then nonnegative, and so, by inequality \eqref{italianalder} in Lemma \ref{cricket-batwillow},  we have $G(A)=g(R) \geq 0$.  Hence $H(A)=(\l_1-\l)(\l_2-\l)(\l_3-\l) + \l G(A) \geq (\l_1-\l)(\l_2-\l)(\l_3-\l)$.   

Now suppose that $\l_1 \leq \l$.  Then the lower bound in \eqref{italianalder} is $2(\beta+\gamma)\leq 0$, and so 
\begin{align*} H(A) & \geq (\l_1-\l)(\l_2-\l)(\l_3-\l) +2\l (\l_1-\l)(\l_2+\l_3) \\
& = (\l_1-\l)(\l_2+\l)(\l_3+\l).
\end{align*}
\end{proof}

The main result of this subsection is the following.  
\begin{proposition}\label{lime} Suppose $h'(\l^3) \geq 0$ and let $W$ be given by \eqref{ww}.  Then any admissible map $u \neq \ul$ is such that: 
\begin{align*} I(u) -I(\ul) & \geq \int_{\om} \kappa|\nabla u-\nabla \ul|^q  + h'(\l^3) H(\nabla u) \,dx. 
\end{align*}
In particular, if $\int_{\om}H(\nabla u) \,dx \geq 0$ then $I(u) > I(\ul)$, while if $I(u) \leq I(\ul)$ then 
\begin{align} \nonumber
\int_{\{x \in \om:  \ \l_1(\nabla u (x)) \geq \l\}}h'(\l^3)(\l_1-\l)(\l_2-\l)(\l_3 - \l) + & \kappa|\nabla u-\nabla \ul|^q \,dx  \\
& \label{cornishelm}\leq \int_{\{x \in \om:  \ \l_1(\nabla u (x)) \leq \l\}} h'(\l^3) (\l - \l_1)(\l+\l_2)(\l+\l_3)\,dx.
\end{align}
\end{proposition}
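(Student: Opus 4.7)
The first displayed inequality in the statement is essentially a restatement of \eqref{almondwillow}: I would recognise the bracketed expression $h'(\l^3)(\hat{\l}_1\hat{\l}_2\hat{\l}_3 + \l G(\nabla u))$ as $h'(\l^3)H(\nabla u)$ via the definition \eqref{defH} of $H$ and collect the two integrands together.  This step is purely notational and requires no further argument beyond reading off the definitions.

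For the first ``in particular'' claim, namely that $\int_{\om}H(\nabla u)\,dx \geq 0$ forces $I(u)>I(\ul)$, I would argue that $u \neq \ul$ together with $u=\ul$ on $\partial\om$ implies that $u-\ul \in W_0^{1,q}(\om;\R^{3})$ is non-trivial, so that $\int_{\om}|\nabla u - \nabla \ul|^q\,dx > 0$ (either by Poincar\'{e} or directly from $\nabla(u-\ul) \not\equiv 0$).  Since the hypothesis $h'(\l^3) \geq 0$ combined with $\int_{\om}H(\nabla u)\,dx \geq 0$ makes the second integral in the main inequality nonnegative, the strict positivity of the $q$-term yields $I(u) > I(\ul)$.

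For inequality \eqref{cornishelm}, I would suppose $I(u) \leq I(\ul)$ and split $\om = \om^+ \cup \om^-$ with $\om^+ := \{x \in \om : \l_1(\nabla u(x)) \geq \l\}$ and $\om^- := \om\setminus \om^+$.  Applying Proposition \ref{bartok1} pointwise on each piece converts the nonpositive expression $\int_{\om}\kappa|\nabla u-\nabla \ul|^q + h'(\l^3)H(\nabla u)\,dx$ into the estimate
\begin{align*} 0 \geq \int_{\om^+} \kappa|\nabla u - \nabla \ul|^q + h'(\l^3)(\l_1-\l)(\l_2-\l)(\l_3-\l)\,dx + \int_{\om^-} \kappa|\nabla u - \nabla \ul|^q + h'(\l^3)(\l_1-\l)(\l_2+\l)(\l_3+\l)\,dx. \end{align*}
On $\om^+$ the $H$-contribution is nonnegative while on $\om^-$ it is nonpositive; moving the $\om^-$ contribution to the right-hand side (so that $(\l_1-\l)$ becomes $(\l-\l_1)$) and then discarding the nonnegative integral $\int_{\om^-}\kappa|\nabla u-\nabla \ul|^q\,dx$ from the left-hand side gives \eqref{cornishelm} verbatim.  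I anticipate no substantive obstacle: the heavy lifting lies in \eqref{almondwillow} and Proposition \ref{bartok1}, and what remains is careful sign tracking and the observation that weakening the $q$-term on the left (from $\om$ to $\om^+$) still yields a correct inequality.
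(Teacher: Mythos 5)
Your proposal is correct and follows exactly the route the paper takes: the paper's proof is simply the observation that the statement follows from \eqref{almondwillow} together with Proposition \ref{bartok1}, and your sign-tracking across the split $\{\l_1 \geq \l\}$, $\{\l_1 \leq \l\}$ (plus the strict positivity of the $q$-term for $u \neq \ul$) fills in the same details the paper leaves implicit.
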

\begin{proof}  This follows from \eqref{almondwillow} and Proposition \ref{bartok1}. 
\end{proof}

We remark that the results of Section \ref{plotselm} imply that inequality \eqref{cornishelm} ought \emph{not} to be possible for $\l$ such that $\l^{3-q}h'(\l^3)$ is sufficiently small (see \eqref{estsuper}).  It is not immediately obvious from \eqref{cornishelm} why this should be so;  nor is it clear why such a prominent role is played by the smallest singular value $\l_1(\nabla u)$.  This surely warrants further investigation.

\appendix
\section*{Appendix}
\renewcommand{\thesection}{A} 
Two different algorithms have been used to compute the quantity $M_3(\l,\infty):=\lim_{c \to \infty}M_3(\l,c)$, as defined in Subsection \ref{concrete}.  We recall the notation 
\begin{align*}m_{l}(\l,A)=\frac{|G(A)|}{|A-\l\1|^{l}}\end{align*}
for $l=2,3$.  The algorithms are also brought to bear on the problem of calculating $M_3^{-}(\l,\infty)$, and we summarise the results below.

\subsection{Algorithm A: conjugate gradient}

This is a `brute force' approach, which consists of 
\begin{itemize}
\item Choosing a value of $\lambda\in[1, 2]$;
\item Generating matrices $A$ with all elements $a_{ij}$ being uniformly
distributed random numbers over the interval $[-\alpha, \alpha]$,
for given $\alpha$;
\item Using the Polak-Ribiere variant of the Fletcher-Reeves algorithm \cite{NRC}, one of the
so-called conjugate gradient methods for maximisation of smooth functions, starting 
from each of these matrices, in order to find a candidate matrix for $A_l^*(\lambda)$, which is an approximate maximiser of $m_3(\cdot,\l)$.  
\end{itemize}
We choose $\alpha = 5$, the justification for which is as follows.    Five thousand matrices were generated and those which gave the 15 largest values of $m_3(\l,\cdot)$ were saved as the computation proceeded.  Of these, none had an element whose modulus exceeded $3.1$, hence reassuring us that the choice $\alpha = 5$ is `safe' for $\l$ in the range $[1,2]$.

Since the algorithm is iterative, a stopping condition is required,
and this is that
\begin{equation}
|m^{(i+1)} - m^{(i)}| \leq
\frac{\epsilon}{2}\left(|m^{(i+1)}| + |m^{(i)}|\right),
\label{stop}
\end{equation}
where $m^{(i)}$ is the value of $m_3(A_i, \lambda)$ at the $i$-th iteration
and $\epsilon = 10^{-9}$.

Various data are saved as the computation progresses, including the current maximising
matrix, which is the latest approximation to $A_l^*(\lambda)$. For half the
simulations, the initial random matrix is symmetric,
and for the other half it is not --- we do not know, \textit{a priori}, whether
$A_l^*(\lambda)$ will be symmetric or not.  The numerics strongly indicated that $A_3^*(\l)$ will indeed be symmetric, at least for $\l$ in the range $[1,2]$.

All computations were carried out using 40 significant figures.  Algorithm A leads the approximation $M_3(\l,\infty)\approx \nu_1^A/\l$, where $\nu_1^A=0.4501$ ---see  Table~\ref{modelM}; the algorithm also produces the approximation to $c_1(\l)$ shown in Figure 2 and summarised in Table \ref{modelC0} below.  

\begin{table}[ht]
\centering
\begin{tabular}{lcccccc}\hline
$\lambda$  & 1.01  & 1.1  & 1.2  & 1.3  & 1.4  & 1.5\\ \hline
\rb$M_3(\lambda,\infty)$ & 0.44566175 & 0.40919852 & 0.37509864 & 0.34624489 & 0.32151312 & 0.30007890\\
$\nu_1^A/\lambda$ & 0.44566173& 0.40919850& 0.37509862& 0.34624488& 0.32151311& 0.30007890\\ \hline\hline
$\lambda$  && 1.6  & 1.7  & 1.8  & 1.9 & 2.0\\\hline
\rb$M_3(\lambda,\infty)$ && 0.28132398 & 0.26477551 & 0.25006575 & 0.23690440 & 0.22505900\\
$\nu_1^A/\lambda$ && 0.28132397& 0.26477550& 0.25006575& 0.23690439& 0.22505917\\ \hline
\end{tabular}
\caption{Computed values of $M_3(\lambda,\infty)$ compared
with the approximation $\nu_1^A/\lambda$, for various values of $\lambda$.
Least squares was used to find $\nu_1^A$. The largest absolute deviation, $1.72\times
10^{-7}$, occurs at $\lambda = 2$.}
\label{modelM}
\end{table}

\begin{table}[ht]
\centering
\begin{tabular}{lcccccc}\hline
$\lambda$  & 1.01  & 1.1  & 1.2  & 1.3  & 1.4  & 1.5\\ \hline
$|A^*_3-\lambda\mathbf{1}|$ & 1.86212 & 2.02791 & 2.21231 & 2.39673 & 2.58098 & 2.76509\\
$c_1(\lambda)$ & 1.86240& 2.02820& 2.21242& 2.39664& 2.58087& 2.76509\\ \hline\hline
$\lambda$  && 1.6  & 1.7  & 1.8  & 1.9 & 2.0\\\hline
$|A^*_3-\lambda\mathbf{1}|$ && 2.94943 & 3.13421 & 3.31866 & 3.50261 & 3.68432\\
$c_1(\lambda)$ && 2.94931& 3.13353& 3.31775& 3.50197& 3.68619\\ \hline
\end{tabular}
\caption{Computed values of $|A^*_3-\lambda\mathbf{1}|$ compared
with the approximation $\nu_2 + \nu_3\lambda$, for various values of $\lambda$.}
\label{modelC0}
\end{table}

\subsection{Algorithm B: pointwise supremum}

This is based on a different idea, although a Monte Carlo approach
it is still at its heart.    We start by fixing an interval for $\lambda$, $\Lambda = [\lambda_-, \lambda_+]$, which is not
necessarily $[1, 2]$ --- the computation time is, at one level, independent of the interval. We
then define $N_p$ equally-spaced points in $\Lambda$, these points being
$\lambda_i = \lambda_- + i\delta\lambda$ with $\delta\lambda = (\lambda_+ -\lambda_-)/N_p$
and $i = 0, \ldots, N_p$.

Next, as before, a large number, $N$, of random matrices $A_i$ are generated. As can
be seen from its definition,
\begin{align*} m_l(A_i, \lambda) = \frac{\left|a_1 + a_2\lambda\right|}{(b_1 + b_2\lambda + b_3\lambda^2)^{l/2}}\end{align*}
where the coefficients $a_1,\ldots b_3$ are functions of the elements of $A_i$ that we compute
numerically. We define $f_{l, i}(\lambda) := m_l(A_i, \lambda)$, and clearly, once the coefficients
have been computed, $f_{l, i}(\lambda)$ can easily be found for any $\lambda$.
 
We then compute
$$F_{l, j} = \sup\left\{f_{l,i}(\lambda_j), i = 1,\ldots, N\right\}$$
for $j = 0,\ldots, N_p$; $F_{l, j}$ is then a discrete approximation to 
$M_l(\lambda_j,\infty)$.   The convergence to $M_3(\l,\infty)$ is quite slow, but nonetheless,
choosing $N$ large enough gives reasonable agreement with results
produced by Algorithm A, thereby providing an independent check.  Compare Fig. 1 with Fig. 2 below.

\begin{figure}[h]
\begin{center} 
\includegraphics[width=5.9in]{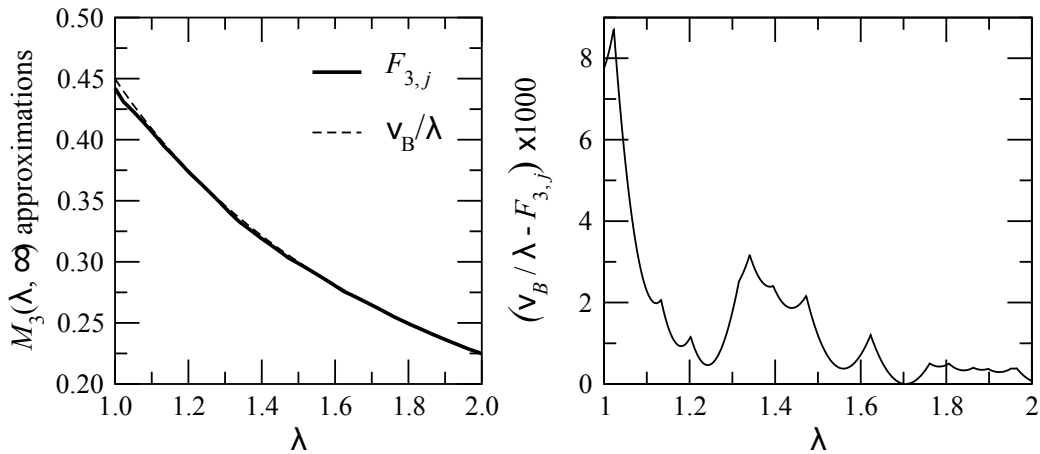}
\caption{Left: approximation from Algorithm B to $M_3(\lambda,\infty)$ and, for comparison,
$\nu_1^B/\lambda$, where $\nu_1^B = 0.4492$. Right: the difference between the
two curves in the left-hand figure.}
\end{center}
\label{G+algB}
\end{figure}

\subsection{Calculating $M^{-}_3(\l,\infty)$} \label{a3}

Using Algorithm A, the methodology is the same as for $M_3(\lambda,\infty)$, with the same number of random matrices generated, whose elements have
the same bounds. The investigations lead us to conjecture that
$$M^-_3(\lambda,\infty) \approx \nu_1^{A,-}/\lambda,$$ where $\nu_1^{A,-} \approx 0.1923$.  See Figure~\ref{G-algA}. 

\begin{figure}[h]
\centering 
\includegraphics*[width=3in]{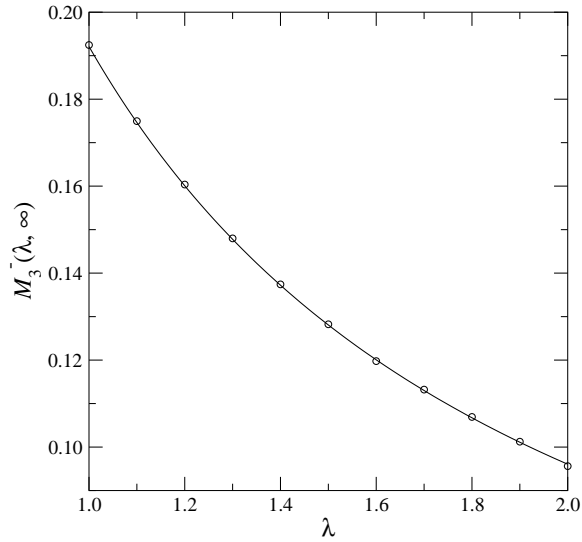}
\caption{The result of using Algorithm A to produce a plot of
$M^-_3(\lambda,\infty)$ versus $\lambda$, for $\lambda = 1.0, 1.1, \ldots, 2.0$, circles, where the
data has been obtained from the maximum of several computations. The
curve shows $\nu_1^{A,-}/\lambda$ for $\nu_1^{A,-} = 0.1923$.}
\label{G-algA}
\end{figure} 

\begin{figure}[htb!]
\centering 
\includegraphics*[width=3in]{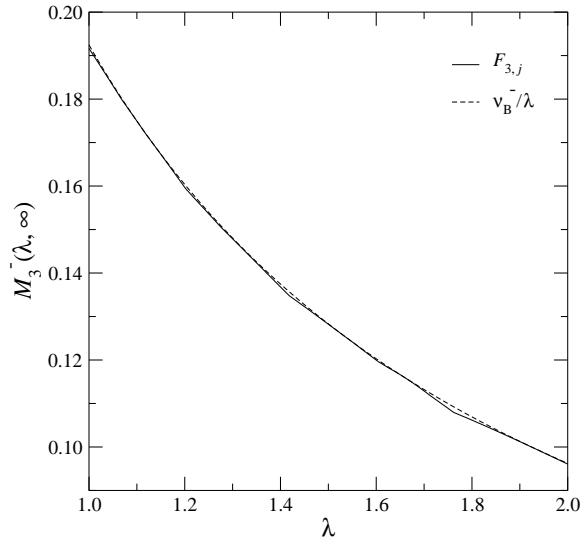}
\caption{Algorithm B used to estimate $M^-_3(\lambda,\infty)$ versus $\lambda$,
(continuous line), compared with the model $\nu_1^{B,-}/\lambda$ with $\nu_1^{B,-} =0.1925$ (dashed line).}
\label{G-algB}
\end{figure}

Recall that in the case of $M_3(\l,\infty)$ it was possible to compute and then model the quantity $c_1(\l)$ accurately on the interval $1 \leq \l \leq 2$ using an affine function of $\l$.  The same cannot be said of the corresponding quantity $c^{-}_1(\l)$, and indeed this seems to behave somewhat erratically as a function of $\l$.    Thus the analysis leading up to
Proposition \ref{identifyc*} does not apply, and hence the caveat regarding the substitution of $M^{-}_3(\l,\infty)\approx \nu_1'/\l$ promised in Remark \ref{silvermaple}.

\end{document}